\documentclass[12pt,reqno]{amsart}

\usepackage[a4paper,margin=1in]{geometry}
\usepackage{amsmath,amssymb,amsthm,mathtools}
\usepackage{bbm}
\usepackage{enumitem}
\usepackage{hyperref}
\usepackage[normalem]{ulem}

\usepackage{mathrsfs}

\usepackage{pdflscape}
\usepackage{subcaption}
\usepackage{caption}
\usepackage{tikz, float, tikzscale}
\usepackage{pgfplots}
\pgfplotsset{compat=1.18}

\usepackage[T1]{fontenc}
\usepackage{libertinus,libertinust1math}

\theoremstyle{definition}
\newtheorem{defi}{Definition}

\theoremstyle{plain}
\newtheorem{thm}{Theorem}
\newtheorem*{thm*}{Theorem}
\newtheorem{prop}{Proposition}
\newtheorem{lemma}{Lemma}

\newtheorem{coro}{Corollary}

\DeclareMathOperator{\diam}{diam}

\title{Prescribed realisation of longest runs in continued fractions}
\author{Ying Wai Lee}

\begin{document}

\begin{abstract}
    Exceptional longest-run behaviour in continued fraction expansions is studied through the interaction between fixed-symbol runs and the overall longest run. For every prescribed partial quotient value and every admissible growth scale, a full Hausdorff dimensional set of irrational numbers is constructed on which the longest run of the prescribed value has exactly the prescribed asymptotic growth and, for every initial length, uniquely realises the overall maximum. It follows that the symbol responsible for the overall longest run can be fixed in advance without any loss of Hausdorff dimension. Thus, the known full-dimensional exceptional-set results for fixed-symbol longest-run growth and for overall longest-run growth are simultaneously strengthened, while the maximising symbol in the overall problem is shown to be fully prescribable.
\end{abstract}
\maketitle

\section{Introduction}

Continued fractions of irrational numbers in the unit interval are a classical topic in number theory and dynamical systems. For any $x\in[0,1)\setminus\mathbb{Q}$, there exists a unique sequence of positive integers $(a_n)_{n\in\mathbb{N}}$ such that $x$ admits the continued fraction expansion:
\begin{align*}
    x=[a_1,a_2,a_3,\dots]
    \coloneqq\frac{1}{\displaystyle a_1+\frac{1}{\displaystyle a_2+\frac{1}{a_3+\cdots}}},
\end{align*}
where for any $n\in\mathbb{N}$, $a_n\coloneqq a_n(x)\in\mathbb{N}$ is referred to as the $n$-th partial quotient of $x$. Define the Gauss map $T:[0,1]\to[0,1]$ by $T(0)\coloneqq1$ and for any $x\in(0,1]$,
\begin{align*}
    T(x)\coloneqq\frac{1}{x}-\left\lfloor\frac{1}{x}\right\rfloor.
\end{align*}
The partial quotients of an irrational number can be generated by iterating the Gauss map. For any $n\in\mathbb{N}$ and $x\in[0,1)\setminus\mathbb{Q}$, the $n$-th partial quotient of $x$ is given by
\begin{align*}
    a_n(x)=\left\lfloor \frac{1}{T^{n-1}(x)}\right\rfloor.
\end{align*}

Longest-run problems for continued fraction expansions have been studied from fixed-symbol and overall points of view. Define, for any $n\in\mathbb{N}$, the fixed-symbol longest-run function $L_n:[0,1)\setminus\mathbb{Q}\times\mathbb{N}\to\mathbb{N}\cup\{0\}$ by, for any $x\in [0,1)\setminus\mathbb{Q}$ and $\lambda\in\mathbb{N}$,
\begin{align*}
    L_n(x,\lambda)
    \coloneqq\max_{0\leq j\leq n-1}\sum_{k=1}^{n-j}\prod_{i=1}^k\mathbf{1}_{a_{j+i}(x)=\lambda};
\end{align*}
equivalently, $L_n(x,\lambda)$ is the maximum length of a consecutive run of the value $\lambda$ within the first $n$ partial quotients of $x$. Define, for any $n\in\mathbb{N}$, the overall longest-run function $R_n:[0,1)\setminus\mathbb{Q}\to\mathbb{N}$ by, for any $x\in[0,1)\setminus\mathbb{Q}$,
\begin{align*}
    R_n(x)
    \coloneqq \max_{\lambda\in\mathbb{N}} L_n(x,\lambda);
\end{align*}
equivalently, $R_n(x)$ is the maximum length of a consecutive run over all values within the first $n$ partial quotients of $x$. For example, one obtains $L_{30}(\pi-3,1)=3$ and $L_{30}(\pi-3,2)=R_{30}(\pi-3)=4$ from the first 30 partial quotients of $\pi-3$:
\begin{align*}
    \pi-3=[7,15,1,292,
    \underline{1,1,1},
    2,1,3,1,14,2,1,1,
    \underline{2,2,2,2},
    1,84,2,1,1,15,3,13,1,4,2,\ldots]
\end{align*}
where the underlined blocks show the longest runs.

Longest-run problems originate in classical probability theory. For independent symbolic processes, the logarithmic order of the longest run goes back to the Erd\H{o}s--R\'enyi law of large numbers~\cite{ErdosRenyi1970}; sharper refinements for the longest head-run were later obtained by Erd\H{o}s and R\'ev\'esz~\cite{ErdosRevesz1975}. Since then, longest-run problems have become a standard object in the study of rare events and extreme symbolic patterns. In the continued-fraction system equipped with the natural absolutely continuous reference measure, such as the Gauss measure, the partial quotients form a stationary symbolic process which is mixing but not independent, and whose one-symbol distribution is highly non-uniform. These dynamical and probabilistic features make longest-run problems for continued fraction expansions a natural analogue of the classical run-length problem, now in the setting of a non-uniformly expanding dynamical system with a countable alphabet.

Typical longest-run behaviours of partial quotients, in the sense of full Lebesgue measure, have been studied in the recent literature. Song--Zhou~\cite[Theorem~1.1]{SongZhou2020} established the first-order behaviour for the fixed-symbol longest-run function: for any $\lambda\in\mathbb{N}$ and almost every $x\in[0,1)\setminus\mathbb{Q}$,
\begin{align*}
    \lim_{n\to\infty}\frac{L_n(x,\lambda)}{\log{n}/\log{\tau_\lambda}}
    =\frac12,
    \qquad
    \tau_\lambda\coloneqq \frac{\lambda+\sqrt{\lambda^2+4}}{2}.
\end{align*}
Wang--Wu~\cite[Theorem~1.3]{WangWu2011} established the first-order behaviour of the overall longest-run function: for almost every $x\in[0,1)\setminus\mathbb{Q}$,
\begin{align*}
    \lim_{n\to\infty}\frac{R_n(x)}{\log{n}/\log{\varphi}}
    =\frac12,
    \qquad
    \varphi\coloneqq\frac{1+\sqrt5}{2}.
\end{align*}
Recently, Lee~\cite[Theorem~2]{Lee2026} obtained eventual two-sided additive refinements of the above first-order laws with double-logarithmic error terms: for any $\lambda\in\mathbb{N}$, $c>1/2$, almost every $x\in[0,1)\setminus\mathbb{Q}$ and sufficiently large $n\in\mathbb{N}$,
\begin{align*}
    \left|L_n(x,\lambda)-\frac{\log{n}}{2\log{\tau_\lambda}}\right|
    \leq \frac{c\log{\log{n}}}{\log{\tau_\lambda}},
    \qquad
    \left|R_n(x)-\frac{\log{n}}{2\log{\varphi}}\right|
    \leq \frac{c\log{\log{n}}}{\log{\varphi}}.
\end{align*}

A central topic in the study of atypical behaviour of longest-run functions concerns the set of numbers whose longest-run growth deviates from the first-order law satisfied by almost every number. Recent work has focused on the Hausdorff dimension of sets defined by prescribed growth rates of longest-run functions. Song--Zhou~\cite[Corollary~1.3]{SongZhou2020} proved that for any $\lambda\in\mathbb{N}$ and $\alpha\in[0,+\infty]$,
\begin{align*}
    \dim{
    \left\{x\in[0,1)\setminus\mathbb{Q}:
    \lim_{n\to\infty}\frac{L_n(x,\lambda)}{\log{n}/\log{\tau_\lambda}}=\alpha\right\}}=1.
\end{align*}
Wang--Wu~\cite[Theorem~1.3]{WangWu2011} obtained a more general exact-growth result for the overall longest-run: for any unbounded non-decreasing sequence of positive integers $(\delta_n)_{n\in\mathbb{N}}$, if
\begin{align*}
    \lim_{n\to\infty}\frac{\delta_{n+\delta_n}}{\delta_n}=1
\end{align*}
then 
\begin{align*}
    \dim{
    \left\{x\in[0,1)\setminus\mathbb{Q}:
    \lim_{n\to\infty}\frac{R_n(x)}{\delta_n}=1\right\}}=1.
\end{align*}
Related $\limsup$- and $\liminf$-type exceptional sets were also studied in the literature~\cite{SongZhou2020,TanZhou2025,WangWu2011}.

The results above prescribe either the longest run of a fixed symbol, or the longest run after maximising over all symbols. However, they do not address the interaction between these two prescriptions. In particular, if the growth of fixed-symbol longest-run is prescribed for a fixed value, it is not automatic that this prescribed symbol also realises the overall longest run. Conversely, prescribing the growth of overall longest-run does not specify which partial quotient value is responsible for the maximal blocks.

In the present work, a full-dimensional exceptional-set theorem is established, in which the growth of a fixed-symbol longest run is prescribed and the overall longest run is uniquely realised by that same prescribed symbol at all times. As consequences, the single theorem is a genuine simultaneous refinement of the results of Song–Zhou and Wang–Wu: not only can the fixed-symbol and overall longest-run growth be prescribed on a full-dimensional set, but the overall longest run can be forced to be uniquely realised at any preassigned partial quotient value.

\section{Main results}

The following definition isolates, in functional notation, the exact hypothesis appearing in the above result of Wang--Wu~\cite[Theorem~1.3]{WangWu2011}.
\begin{defi}
Let $f:\mathbb{N}\to\mathbb{N}$ be a function. $f$ is said to be admissible, if $f$ is non-decreasing, unbounded, and
\begin{align*}
    \lim_{n\to\infty}\frac{f(n+f(n))}{f(n)}=1.
\end{align*}
\end{defi}

Theorem~\ref{thm:main-sn-final} is the main theorem of the present work.
\begin{thm}
\label{thm:main-sn-final}
For any $\lambda\in\mathbb{N}$ and admissible $f:\mathbb{N}\to\mathbb{N}$, 
\begin{align*}
    \dim{\mathfrak{E}_{\lambda,f}}=1,
\end{align*}
where
\begin{align*}
    \mathfrak{E}_{\lambda,f}
    &\coloneqq
    \left\{x\in[0,1)\setminus\mathbb{Q}:
    \lim_{n\to\infty}\frac{L_n(x,\lambda)}{f(n)}=1
    \text{ and }\right.\\
    &\qquad\qquad\left.\text{for any $n\in\mathbb{N}$ and $\mu\in\mathbb{N}\setminus\{\lambda\}$, }\frac{L_n(x,\lambda)}{L_n(x,\mu)}>1\right\}.
\end{align*}
\end{thm}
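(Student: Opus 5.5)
The upper bound $\dim\mathfrak{E}_{\lambda,f}\le 1$ is trivial. For the lower bound, I would fix a large integer $M$ and, for every $\epsilon>0$, build a Cantor-type subset of $\mathfrak{E}_{\lambda,f}$ whose dimension is at least $\dim E_M-\epsilon$. Here $E_M$ denotes the set of irrationals with all partial quotients in $\{1,\dots,M\}$. Since $\dim E_M\to 1$ as $M\to\infty$ (Jarník), this gives $\dim\mathfrak{E}_{\lambda,f}=1$.

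The construction uses free words with no long runs. Fix $M\ge \lambda+2$ and a block length $k$. Let $\mathcal{W}$ be the set of words of length $k$ over $\{1,\dots,M\}$ in which no two consecutive letters are equal, which first and last letter are different from $\lambda$. Concatenating words of $\mathcal{W}$ with a separator letter $s\notin\{\lambda\}$ chosen to differ from its neighbours then keeps every run of a symbol $\mu\neq\lambda$ of length at most $1$. A standard pressure argument (the restricted alphabet $\{1,\dots,M\}$ with a forbidden-repetition constraint) shows that the dimension of the limit set generated by $\mathcal{W}$ tends to $\dim E_M$ as $k\to\infty$, after also letting $M\to\infty$. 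The positions of the runs are dictated by $f$. Define the target sequence $x$ by placing, at the beginning, the single letter $\lambda$, so that $L_n(x,\lambda)\ge1>0=L_n(x,\mu)$ for all $\mu\ne\lambda$ at $n=1$. After that, at increasing times $n_j$, insert a block $\lambda^{\ell_j}$ with $\ell_j=f(n_j)$, chosen so that $\ell_j$ increases whenever $f$ jumps. A natural choice is to insert a new run whenever $f(n)$ exceeds the current maximal $\lambda$-run, in the style of Wang--Wu. Between the runs, fill with free words from $\mathcal{W}$ (with separators). Because every non-$\lambda$ run has length at most $1$ and the $\lambda$-run of length $\ge 2$ appears early (a second initial $\lambda$ gives the strict inequality from $n=2$ on, while at $n=1$ it holds since $L_1(x,\mu)=0$), the condition $L_n(x,\lambda)>L_n(x,\mu)$ holds for every $n$. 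Note that the ratio with $L_n(x,\mu)=0$ is interpreted as $+\infty>1$.

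Next I would verify the growth condition $L_n/f(n)\to1$. A run of $\lambda$ of length $f(n_j)$ is written progressively, so for $n$ in the middle of writing it one needs $L_n(x,\lambda)\ge$ the previous run, which lies within a factor $1+o(1)$ of $f(n)$. This is exactly where admissibility $f(n+f(n))/f(n)\to1$ is used. Following Wang--Wu, I would choose $n_j$ as times where $f$ has increased since the last run, and start the block $\lambda^{f(n_j)}$ at position $n_j$. During positions $n_j\le n\le n_j+f(n_j)$ we then have $f(n)\le f(n_j+f(n_j))\sim f(n_j)$, while $L_n\ge f(n_{j-1})$. One must control $f(n_j)/f(n_{j-1})$; to handle large jumps of $f$, I would split a large jump by inserting runs at the intermediate values. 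Unboundedness of $f$ together with the growth restriction makes the runs sparse, with total inserted length $o(n)$, since $f(n)=o(n)$ follows from admissibility.

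Finally comes the dimension estimate, which I expect to be the main obstacle. The set is a Cantor set in which a fraction $1-o(1)$ of the positions are free choices from $\mathcal{W}$, while the forced positions (the $\lambda$-runs and the separators) have zero density. The concern is that forced blocks $\lambda^{\ell}$ shrink cylinder lengths by roughly $\tau_\lambda^{-2\ell}$, which could distort the dimension. Because their total length up to $n$ is $o(n)$, this contributes only $e^{o(n)}$ to the cylinder-length ratios. I would then apply the mass distribution principle: distribute the measure uniformly (or by Gibbs weights) on the free words, and bound $\mu(B(x,r))\le r^{s-\epsilon}$ using the bounded distortion property $|I_n(a_1,\dots,a_n)|\asymp q_n^{-2}$ and the quasi-multiplicativity of $q_n$. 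The delicate point is handling balls that sit at levels inside a long forced run, where the mass does not split. This is dealt with by noting that $f(n)=o(n)$, so each forced run is short relative to the current level, again costing only a factor $r^{-o(1)}$.
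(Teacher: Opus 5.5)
\textbf{Main gap: the free words lose dimension.} Your proposal fails at the step you treat as routine, the choice of free words. Forbidding two equal consecutive letters constrains every position, not only block boundaries, so it does not wash out as $k\to\infty$ or $M\to\infty$.

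In fact the set $\{x: a_{n+1}(x)\neq a_n(x)\text{ for all }n\}$ has Hausdorff dimension strictly less than $1$. For every word $w$ one has $|\mathcal{I}(w11)|\geq\tfrac{1}{15}|\mathcal{I}(w)|$. Hence the level-$2n$ cylinders with no pair $11$ in positions $(2i-1,2i)$ have total length at most $(14/15)^n$, so the pressure of the no-repetition system is negative at $t=1$. Being finite and convex on $(1/2,\infty)$, it is still negative at some $t_0<1$, and this bounds the dimension by $t_0$.

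Your Cantor sets obey the same constraint at all free positions, including across separators. The forced positions only shrink cylinders further, because $K(\omega_1\omega_2)\geq K(\omega_1)K(\omega_2)$. So every set you build has dimension at most some $\delta<1$, independent of $k$ and $M$. The asserted limit $\dim E_M$ is therefore false, and $\dim\mathfrak{E}_{\lambda,f}\geq 1$ cannot be reached this way.

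\textbf{The repair, and what it costs.} You must allow accidental runs that are long but $o(f(n))$, i.e.\ arbitrary letters inside free pieces cut by separators. The paper uses arbitrary words over $\{1,\dots,B\}$ cut into pieces of length $\lfloor\sqrt{r_k}\rfloor$ by $\sigma_1\sigma_2$, so that deleting the inserted symbols maps the Cantor set onto $\mathcal{F}_B$. Then non-$\lambda$ runs exceed length $1$, and your initial $\lambda\lambda$ no longer gives dominance at every $n$. You need an initial $\lambda$-run longer than every accidental run that occurs before the forced runs take over. The paper handles this by proving only eventual dominance, writing that set as $\bigcup_N\mathfrak{F}_{\lambda,f,N}$, and mapping each $\mathfrak{F}_{\lambda,f,N}$ into $\mathfrak{E}_{\lambda,f}$ by the bi-Lipschitz map $T_{\lambda^N}$.

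\textbf{Second problem: sparsity of the forced runs.} Your sparsity claim does not follow from $f(n)=o(n)$, which bounds a single run, not the total forced length. Suppose a run $\lambda^{f(n_j)}$ is started whenever $f$ overtakes the current maximum. For an admissible $f$ of order $n/\log n$, the next run is then triggered as soon as the previous one ends, and the forced positions have density tending to $1$. You need one of two fixes:
\begin{enumerate}
\item gaps of size $C_jf(n_j)$ with $C_j\to\infty$ slowly, obtained by a diagonal argument from $f(n+Cf(n))/f(n)\to 1$ for each fixed $C$; or
\item as in the paper, accept a forced density of order $1/(MH_\lambda+1)$ and absorb it into a H\"older exponent $\alpha<M/(M+1)$ before letting $M\to\infty$.
\end{enumerate}
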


The first corollary records the fixed-symbol exact-growth consequence of Theorem~\ref{thm:main-sn-final}. Compared with the fixed-symbol exceptional-set theorem of Song--Zhou, it gives a refinement in which the prescribed growth scale is allowed to be any admissible function. Thus this corollary isolates the fixed-symbol part of the main theorem after the additional dominance condition is discarded.
\begin{coro}
\label{label:coro1}
For any $\lambda\in\mathbb{N}$ and admissible $f:\mathbb{N}\to\mathbb{N}$, 
\begin{align*}
    \dim{\left\{x\in[0,1)\setminus\mathbb{Q}:
    \lim_{n\to\infty}\frac{L_n(x,\lambda)}{f(n)}=1\right\}}=1.
\end{align*}
\end{coro}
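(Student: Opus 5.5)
This corollary should follow directly from Theorem~\ref{thm:main-sn-final} by monotonicity of Hausdorff dimension. Fix $\lambda\in\mathbb{N}$ and an admissible $f$, and write
\begin{align*}
    F_{\lambda,f}\coloneqq\left\{x\in[0,1)\setminus\mathbb{Q}:\lim_{n\to\infty}\frac{L_n(x,\lambda)}{f(n)}=1\right\}.
\end{align*}
The set $\mathfrak{E}_{\lambda,f}$ is defined by the condition defining $F_{\lambda,f}$ together with the extra dominance condition $L_n(x,\lambda)/L_n(x,\mu)>1$ for all $n$ and all $\mu\neq\lambda$. Dropping that extra condition gives the inclusion $\mathfrak{E}_{\lambda,f}\subseteq F_{\lambda,f}$.

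Monotonicity of Hausdorff dimension then gives $\dim F_{\lambda,f}\geq\dim\mathfrak{E}_{\lambda,f}=1$, where the equality is Theorem~\ref{thm:main-sn-final}. Since $F_{\lambda,f}\subseteq[0,1]$, we also have $\dim F_{\lambda,f}\leq 1$, so $\dim F_{\lambda,f}=1$.

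There is no real obstacle here; all the difficulty lies in Theorem~\ref{thm:main-sn-final}. The only point to check is the inclusion of sets, which is immediate from the definitions: it needs nothing about admissibility, and it does not matter how the ratio is interpreted when $L_n(x,\mu)=0$, since that clause is simply discarded.
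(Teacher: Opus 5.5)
Your proposal is correct and is exactly the deduction the paper intends: the paper gives no separate argument, only the remark that this corollary is the main theorem with the dominance condition discarded. The inclusion $\mathfrak{E}_{\lambda,f}\subseteq F_{\lambda,f}$, monotonicity of Hausdorff dimension together with $\dim\mathfrak{E}_{\lambda,f}=1$, and the trivial bound $\dim F_{\lambda,f}\leq 1$ are all that is needed, as you say.
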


The second corollary records the overall exact-growth consequence of Theorem~\ref{thm:main-sn-final}. Compared with the theorem of Wang--Wu for the overall longest-run function, it recovers the same full-dimensional exact-growth statement. Thus this corollary shows that the main theorem contains the known overall result as a special consequence, with the additional information that the maximising symbol may be prescribed.
\begin{coro}
\label{label:coro2}
For any admissible $f:\mathbb{N}\to\mathbb{N}$, 
\begin{align*}
    \dim{\left\{x\in[0,1)\setminus\mathbb{Q}:
    \lim_{n\to\infty}\frac{R_n(x)}{f(n)}=1\right\}}=1.
\end{align*}
\end{coro}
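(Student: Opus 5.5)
The plan is to deduce the corollary directly from Theorem~\ref{thm:main-sn-final}, by showing that the exceptional set in Theorem~\ref{thm:main-sn-final} sits inside the set in the corollary. Then monotonicity of Hausdorff dimension gives the lower bound, and the upper bound is trivial.

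Fix an admissible $f$ and choose any $\lambda\in\mathbb{N}$; for definiteness take $\lambda=1$. Denote the set in the statement by
\begin{align*}
    F_f\coloneqq\left\{x\in[0,1)\setminus\mathbb{Q}:\lim_{n\to\infty}\frac{R_n(x)}{f(n)}=1\right\}.
\end{align*}
The key step is the inclusion $\mathfrak{E}_{\lambda,f}\subseteq F_f$.

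To prove the inclusion, take $x\in\mathfrak{E}_{\lambda,f}$ and $n\in\mathbb{N}$. Only finitely many values $\mu$ occur among $a_1(x),\dots,a_n(x)$, and $L_n(x,\mu)=0$ for every other $\mu$. Hence the maximum defining $R_n(x)$ is a maximum over finitely many nonzero candidates together with $0$, so it is attained. The defining condition of $\mathfrak{E}_{\lambda,f}$ gives $L_n(x,\lambda)>L_n(x,\mu)$ for every $\mu\neq\lambda$. Therefore the maximum is attained at $\mu=\lambda$, and
\begin{align*}
    R_n(x)=L_n(x,\lambda)\qquad\text{for every } n\in\mathbb{N}.
\end{align*}
(The ratio condition also forces $L_n(x,\lambda)\ge 1$ for all $n$, but this is not needed.) It follows that $R_n(x)/f(n)=L_n(x,\lambda)/f(n)\to1$, so $x\in F_f$.

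Monotonicity of Hausdorff dimension and Theorem~\ref{thm:main-sn-final} now give $1=\dim\mathfrak{E}_{\lambda,f}\le\dim F_f$. Since $F_f\subseteq[0,1)$, we also have $\dim F_f\le1$, hence $\dim F_f=1$.

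There is no real obstacle here: all the difficulty lies in Theorem~\ref{thm:main-sn-final}, which we may assume. The only point needing care is that the supremum over the infinite alphabet in $R_n$ is genuinely attained. This holds because $L_n(x,\cdot)$ vanishes outside the finitely many values present among the first $n$ partial quotients.
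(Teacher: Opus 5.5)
Your argument is correct and is the deduction the paper intends; the paper gives no separate proof and presents the corollary as a direct consequence of Theorem~\ref{thm:main-sn-final}. The dominance condition in $\mathfrak{E}_{\lambda,f}$ forces $R_n(x)=L_n(x,\lambda)$ for every $n$, so $\mathfrak{E}_{\lambda,f}$ lies inside the set in question, and monotonicity of Hausdorff dimension finishes. Your remark that the maximum is attained is harmless but unnecessary: the strict inequality $L_n(x,\mu)<L_n(x,\lambda)$ for all $\mu\neq\lambda$ already bounds the supremum by $L_n(x,\lambda)$.
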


The third corollary records the prescribed-symbol dominance consequence of Theorem~\ref{thm:main-sn-final}. Unlike the previous fixed-symbol and overall exact-growth results, this statement has no direct analogue in the earlier literature. Thus this corollary gives a new full-dimensional phenomenon: the partial quotient value responsible for the overall longest run can be fixed in advance.
\begin{coro}
\label{label:coro3}
For any $\lambda\in\mathbb{N}$,
\begin{align*}
    \dim{\left\{x\in[0,1)\setminus\mathbb{Q}:
    \text{for any $n\in\mathbb{N}$ and $\mu\in\mathbb{N}\setminus\{\lambda\}$, }\frac{
    L_n(x,\lambda)}{L_n(x,\mu)}>1\right\}}=1.
\end{align*}
\end{coro}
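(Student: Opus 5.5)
Corollary~\ref{label:coro3} will follow from Theorem~\ref{thm:main-sn-final} by a monotonicity argument. The approach is to fix $\lambda\in\mathbb{N}$ and pick a concrete admissible function. The simplest choice is $f(n)\coloneqq n$. This is non-decreasing and unbounded. It is admissible because
\begin{align*}
    \frac{f(n+f(n))}{f(n)}=\frac{2n}{n}=2,
\end{align*}
which fails the limit condition. So instead I take $f(n)\coloneqq\lceil\log(n+1)\rceil$. It is non-decreasing and unbounded. Since $\log(n+\log n+1)-\log(n+1)\to0$, and ceilings change the values by at most $1$ while $f(n)\to\infty$, we get $f(n+f(n))/f(n)\to1$. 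Hence this $f$ is admissible.

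Next, apply Theorem~\ref{thm:main-sn-final} to this $\lambda$ and $f$. This gives $\dim\mathfrak{E}_{\lambda,f}=1$. By definition, every $x\in\mathfrak{E}_{\lambda,f}$ satisfies $L_n(x,\lambda)/L_n(x,\mu)>1$ for all $n\in\mathbb{N}$ and all $\mu\neq\lambda$. So $\mathfrak{E}_{\lambda,f}$ is contained in the set appearing in Corollary~\ref{label:coro3}. By monotonicity of Hausdorff dimension, that set has dimension at least $1$. Since it is a subset of $[0,1)$, its dimension is at most $1$, and equality follows.

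The only point requiring care is how to read the ratio $L_n(x,\lambda)/L_n(x,\mu)$ when $L_n(x,\mu)=0$. The corollary uses exactly the same convention as the theorem's definition of $\mathfrak{E}_{\lambda,f}$, so the inclusion holds verbatim whatever that convention is. There is no real obstacle here; the only step needing any thought is checking that some admissible $f$ exists, which the logarithm above settles.
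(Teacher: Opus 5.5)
Your argument is correct and is exactly how the corollary follows from the paper: for any admissible $f$ (your choice $f(n)=\lceil\log(n+1)\rceil$ works), $\mathfrak{E}_{\lambda,f}$ is contained verbatim in the set of Corollary~\ref{label:coro3}, so Theorem~\ref{thm:main-sn-final} together with monotonicity of Hausdorff dimension gives the result. Tidy up the first paragraph, where the sentence about $f(n)=n$ claims admissibility and then refutes it in the same breath.
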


\section{Basic lemmas}

Let $\mathbb{N}^*\coloneqq\{\emptyset\}\cup\bigcup_{n\in\mathbb{N}}\mathbb{N}^n$ be the finite word space. The standard convention is adopted that juxtaposition of two or more finite words denotes their concatenation. If a finite word is put inside continued-fraction brackets, the brackets denote the finite continued fraction whose entries are the letters of that word. The same convention is used when a finite word is followed by an infinite continued-fraction tail.

Define $\mathcal{I}(\emptyset)\coloneqq[0,1]$ and for any $n\in\mathbb{N}$ and $\omega=(\omega_1,\dots,\omega_n)\in\mathbb{N}^n$, $\mathcal{I}(\omega)$ to be the continued-fraction cylinder of $\omega$:
\begin{align*}
    \mathcal{I}(\omega)
    \coloneqq
    \bigcap_{i=1}^n\overline{\left\{x\in[0,1)\setminus\mathbb{Q}:
    a_i(x)=\omega_i\right\}}.
\end{align*}
Define $K:\mathbb{N}^*\to\mathbb{N}$ to be the continuant function; that is $K(\emptyset)\coloneqq1$ and for any $n\in\mathbb{N}$ and $\omega=(\omega_1,\dots,\omega_n)\in\mathbb{N}^n$, $K(\omega)$ is the denominator of the fraction $[\omega_1,\ldots,\omega_n]$.

\begin{lemma}
\label{lem:continuant-concatenation}
For any $\omega_1,\omega_2\in\mathbb{N}^*$, $K(\omega_1)K(\omega_2)\leq K(\omega_1\omega_2)\leq 2K(\omega_1)K(\omega_2)$.
\end{lemma}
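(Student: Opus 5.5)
The plan is to use the standard splitting identity for continuants, which comes from the matrix form of the recursion. For $\omega=(\omega_1,\dots,\omega_n)$, write $K(\omega)$ via the product
\begin{align*}
    \begin{pmatrix}\omega_1&1\\1&0\end{pmatrix}\cdots\begin{pmatrix}\omega_n&1\\1&0\end{pmatrix}
    =\begin{pmatrix}K(\omega_1,\dots,\omega_n)&K(\omega_1,\dots,\omega_{n-1})\\K(\omega_2,\dots,\omega_n)&K(\omega_2,\dots,\omega_{n-1})\end{pmatrix}.
\end{align*}
The upper-left entry is the denominator of $[\omega_1,\ldots,\omega_n]$, with the convention $K(\emptyset)=1$. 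For words of length at most one, the deleted-letter continuants are interpreted in the obvious way, with $K(\text{length }-1)=0$. This identity follows by induction on $n$ from the recursion $q_n=\omega_nq_{n-1}+q_{n-2}$.

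Splitting the product at the junction of $\omega_1$ and $\omega_2$ and reading off the upper-left entry gives the concatenation formula
\begin{align*}
    K(\omega_1\omega_2)=K(\omega_1)K(\omega_2)+K(\omega_1^-)K({}^-\omega_2).
\end{align*}
Here $\omega_1^-$ is $\omega_1$ with its last letter deleted, and ${}^-\omega_2$ is $\omega_2$ with its first letter deleted. If either word is empty, the claim is trivial, since then $K(\omega_1\omega_2)=K(\omega_1)K(\omega_2)$. So assume both words are nonempty.

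The lower bound is immediate because the second term is nonnegative. For the upper bound, the plan is to use the monotonicity of continuants under deleting an end letter: $K(\omega^-)\le K(\omega)$ and $K({}^-\omega)\le K(\omega)$. The first follows from $q_n=\omega_nq_{n-1}+q_{n-2}\ge q_{n-1}$. The second follows from the symmetry $K(\omega)=K(\overleftarrow{\omega})$, obtained by transposing the matrix product. Substituting these bounds gives $K(\omega_1\omega_2)\le 2K(\omega_1)K(\omega_2)$.

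I expect no real obstacle. The only care needed is in the boundary conventions: single-letter words, where $K(\omega^-)=K(\emptyset)=1\le\omega_1$, and the empty word. These should be handled by checking them directly rather than through the general matrix formula.
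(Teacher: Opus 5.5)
Your proposal is correct and matches the paper's proof: both rest on the splitting identity $K(\omega_1\omega_2)=K(\omega_1)K(\omega_2)+K(\omega_1^-)K({}^-\omega_2)$ combined with the bounds $0\le K(\omega_1^-)\le K(\omega_1)$ and $0\le K({}^-\omega_2)\le K(\omega_2)$. You also supply the justifications the paper leaves implicit: the matrix-product derivation of the identity, and monotonicity under end-deletion via the recursion and reversal symmetry.
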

\begin{proof}
The claim is immediate when one of the words is empty; it suffices to handle the non-trivial case. By the continuant identity, one obtains $K(\omega_1\omega_2)=K(\omega_1)K(\omega_2)+K({\omega_{1}}\partial)\,K(\partial\omega_2)$,
where ${\omega_1}\partial$ denotes $\omega_1$ with its last digit removed and $\partial\omega_2$ denotes $\omega_2$ with its first digit removed. The result follows from $0\leq K({\omega_1}\partial)\leq K(\omega_1)$ and $0\leq K(\partial\omega_2)\leq K(\omega_2)$.
\end{proof}

Define, for any $\lambda\in\mathbb{N}$, $\lambda^0\coloneqq\emptyset$ and for $n\in\mathbb{N}$, the long-run word $\lambda^n\coloneqq(\lambda,\lambda,\ldots,\lambda)\in\mathbb{N}^n$.

\begin{lemma}
\label{lem:continuant-growth}
For any $\omega\in\mathbb{N}^n$, $K(\omega)\geq \varphi^{n-1}$. For any $\lambda\in\mathbb{N}$ and $n\in\mathbb{N}\cup\{0\}$, $K(\lambda^n)\leq {\tau_\lambda}^n$.
\end{lemma}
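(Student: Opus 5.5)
Both inequalities follow from the three-term recurrence for continuants, by induction on the length of the word. Write $q_k\coloneqq K(\omega_1,\dots,\omega_k)$ for the continuant of the prefix of length $k$, with $q_0=1$ and $q_{-1}\coloneqq0$. Then
\begin{align*}
    q_k=\omega_kq_{k-1}+q_{k-2}\qquad(k\geq1).
\end{align*}
This is the same recurrence that underlies the continuant identity used in the proof of Lemma~\ref{lem:continuant-concatenation}.

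For the lower bound, I claim $q_k\geq\varphi^{k-1}$ for every $k\geq0$. The base cases hold, since $q_0=1\geq\varphi^{-1}$ and $q_1=\omega_1\geq1=\varphi^0$. For the inductive step, use $\omega_k\geq1$ and the identity $\varphi^2=\varphi+1$:
\begin{align*}
    q_k\geq q_{k-1}+q_{k-2}\geq\varphi^{k-2}+\varphi^{k-3}=\varphi^{k-3}(\varphi+1)=\varphi^{k-1}.
\end{align*}
Taking $k=n$ gives $K(\omega)\geq\varphi^{n-1}$.

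For the upper bound, set $\omega=\lambda^n$ and $p_k\coloneqq K(\lambda^k)$, with $p_{-1}=0$ and $p_0=1$. Then $p_k=\lambda p_{k-1}+p_{k-2}$. Note that $\tau_\lambda$ is the positive root of $t^2=\lambda t+1$. I claim $p_k\leq\tau_\lambda^k$ for all $k\geq0$. The base cases are $p_0=1=\tau_\lambda^0$ and $p_1=\lambda\leq\tau_\lambda$; the latter holds because $\tau_\lambda>\lambda$. For the inductive step,
\begin{align*}
    p_k\leq\lambda\tau_\lambda^{k-1}+\tau_\lambda^{k-2}=\tau_\lambda^{k-2}(\lambda\tau_\lambda+1)=\tau_\lambda^{k}.
\end{align*}
The case $n=0$ is covered by $K(\emptyset)=1$.

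There is no real obstacle here. The only care needed is with the base cases, in particular the convention $q_{-1}=0$ and the fact that the lower bound exponent $n-1$ (rather than $n$) is forced by the case $n=1$, $\omega_1=1$.
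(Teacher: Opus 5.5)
Your proof is correct and essentially matches the paper's: both arguments rest on the three-term continuant recurrence $q_k=\omega_kq_{k-1}+q_{k-2}$. You run a direct induction, using $\omega_k\geq1$ and $\tau_\lambda^2=\lambda\tau_\lambda+1$. The paper instead reduces the lower bound to $K(1^n)$ via monotonicity in the entries, and gets the upper bound from the closed form $K(\lambda^n)=(\tau_\lambda^{n+1}-(-\tau_\lambda)^{-n-1})/\sqrt{\lambda^2+4}$, so your version is slightly more self-contained.
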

\begin{proof}
For any $n\in\mathbb{N}$ and $\omega\in\mathbb{N}^n$, $K(\omega)\geq K(1^n)\geq \varphi^{n-1}$. One obtains from the standard continuant recurrence formula that for any $n\in\mathbb{N}$, $K(\lambda^{n+1})=\lambda\,K(\lambda^n)+K(\lambda^{n-1})$ and 
\begin{align*}
    K(\lambda^n)
    =\frac{{\tau_\lambda}^{n+1}-(-\tau_\lambda)^{-n-1}}{\sqrt{\lambda^2+4}}
    \leq{\tau_\lambda}^n.
\end{align*}
\end{proof}
Define $T_{\emptyset}\coloneqq\mathrm{id}_{[0,1]}$. Define, for any $n\in\mathbb{N}$ and $\omega=(\omega_1,\ldots,\omega_n)\in\mathbb{N}^n$, the prefix map $T_\omega:[0,1]\to[0,1]$ by, for any $\xi\in[0,1]$,
\begin{align*}
    T_\omega(\xi)\coloneqq \frac{p_n+\xi p_{n-1}}{q_n+\xi q_{n-1}},
\end{align*}
where $p_0\coloneqq0$, $q_0\coloneqq 1$ and for any $k\in\{1,\ldots,n\}$, $p_k$ and $q_k$ are coprime positive integers so that $p_k/q_k=[\omega_1,\ldots,\omega_k]$. 

\begin{lemma}
\label{lem:prefix-map-derivative}
For any $\omega\in\mathbb{N}^*$, one obtains that $T_\omega([0,1])=\mathcal{I}(\omega)$ and for any $\xi\in[0,1]$, 
\begin{align*}
    \frac{1}{4K(\omega)^2}
    \leq\left|T_\omega'(\xi)\right|
    \leq\frac{1}{K(\omega)^2};
\end{align*}
in particular, $T_\omega$ is bi-Lipschitz and preserves the Hausdorff dimension of subsets of $[0,1]$.
\end{lemma}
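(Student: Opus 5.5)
The plan is to treat $T_\omega$ as an explicit Möbius transformation and differentiate it directly, then read off every claim from the resulting formula. Write $\omega\in\mathbb{N}^n$ with convergents $p_k/q_k$ as in the definition, so that $q_n=K(\omega)$ and $q_{n-1}=K(\omega\partial)$, where $\omega\partial$ is $\omega$ with its last digit removed. The case $\omega=\emptyset$ is trivial: $T_\emptyset=\mathrm{id}$, $\mathcal{I}(\emptyset)=[0,1]$, $K(\emptyset)=1$, and $1/4\le 1\le 1$. Assume $n\ge1$ from now on.

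First I will establish the image statement. The standard identity is $[\omega_1,\ldots,\omega_n+\xi']=\ldots$; more precisely, for $\xi\in(0,1)$ irrational one has
\begin{align*}
[\omega_1,\ldots,\omega_n,\text{(expansion of }\xi)]=\frac{p_n+\xi p_{n-1}}{q_n+\xi q_{n-1}}.
\end{align*}
This follows by induction on $n$ from the recurrences $p_k=\omega_kp_{k-1}+p_{k-2}$ and $q_k=\omega_kq_{k-1}+q_{k-2}$, using $T^n$ as the inverse branch. Hence $T_\omega$ maps the irrationals of $(0,1)$ bijectively onto $\{x: a_i(x)=\omega_i,\ 1\le i\le n\}$. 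Since $T_\omega$ is continuous and monotone on $[0,1]$, and the closure of that set is the interval with endpoints $T_\omega(0)=p_n/q_n$ and $T_\omega(1)=(p_n+p_{n-1})/(q_n+q_{n-1})$, we get $T_\omega([0,1])=\mathcal{I}(\omega)$.

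Next I will compute the derivative. The quotient rule together with the determinant identity $p_{n-1}q_n-p_nq_{n-1}=(-1)^n$ gives
\begin{align*}
T_\omega'(\xi)=\frac{(-1)^n}{(q_n+\xi q_{n-1})^2}.
\end{align*}
Because $0\le q_{n-1}\le q_n$ and $\xi\in[0,1]$, we have $q_n\le q_n+\xi q_{n-1}\le 2q_n$. This yields $1/(4K(\omega)^2)\le|T_\omega'(\xi)|\le 1/K(\omega)^2$. The inequality $q_{n-1}\le q_n$ is immediate from the recurrence since $\omega_n\ge1$; alternatively it follows from the bound $K(\omega\partial)\le K(\omega)$ used in Lemma~\ref{lem:continuant-concatenation}.

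Finally, the derivative bounds and the mean value theorem give, for all $\xi,\eta\in[0,1]$,
\begin{align*}
\frac{|\xi-\eta|}{4K(\omega)^2}\le|T_\omega(\xi)-T_\omega(\eta)|\le\frac{|\xi-\eta|}{K(\omega)^2}.
\end{align*}
So $T_\omega$ is bi-Lipschitz onto its image, and bi-Lipschitz maps preserve Hausdorff dimension.

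The only step needing care is the inductive identity for the image. I expect it to be routine rather than a real obstacle, and the determinant identity handles the sign of the derivative.
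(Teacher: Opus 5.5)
Your proof is correct and follows the same route as the paper's. Both differentiate the Möbius map, use $|p_{n-1}q_n-p_nq_{n-1}|=1$ to get $|T_\omega'(\xi)|=(q_n+\xi q_{n-1})^{-2}$, bound $q_n\le q_n+\xi q_{n-1}\le 2q_n$, and conclude bi-Lipschitz via the mean value theorem. The one difference is that the paper takes $T_\omega([0,1])=\mathcal{I}(\omega)$ as standard, while your extra sketch proves it with $\mathcal{I}(\omega)$ read as the closure of $\{x:a_i(x)=\omega_i,\ 1\le i\le n\}$. That is the intended meaning: the paper's literal intersection-of-closures definition adds stray rational points, such as $1/2$ when $\omega=(2,1)$.
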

\begin{proof}
The claim is immediate when the word is empty; it suffices to handle the non-trivial case. Pick any $n\in\mathbb{N}$ and $\omega\in\mathbb{N}^n$. By differentiation and the standard formula $|p_{n-1}q_n-p_nq_{n-1}|=1$, one obtains that for any $\xi\in[0,1]$, $\left|T_\omega'(\xi)\right|=1/(q_n+\xi q_{n-1})^2$. Since $0\leq q_{n-1}\leq q_n=K(\omega)$, one obtains that for any $\xi\in[0,1]$, $K(\omega)\leq q_n+\xi q_{n-1}\leq 2K(\omega)$. Since $T_\omega'$ is bounded by positive constants on $[0,1]$, one obtains from applying the mean value theorem that $T_\omega$ is bi-Lipschitz.
\end{proof}

Define, for any $B\in\mathbb{N}$, 
\begin{align*}
    \mathcal{F}_B
    \coloneqq
    \bigcap_{n\in\mathbb{N}}
    \left\{x\in[0,1)\setminus\mathbb{Q}: a_n(x)\in\{1,\ldots,B\}\right\}.
\end{align*}
\begin{lemma}
\label{lem:bounded-prefix-dimension}
For any $B,n\in\mathbb{N}$ and $\nu\in\{1,\dots,B\}^n$,
\begin{align*}
    \dim{\mathcal{F}_B}
    =\dim{\left(\mathcal{I}(\nu)\cap\mathcal{F}_B\right)}.
\end{align*}
\end{lemma}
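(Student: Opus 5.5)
The approach is to show that the dimension of $\mathcal{F}_B$ is already carried by every cylinder. The key identity is
\begin{align*}
    \mathcal{I}(\nu)\cap\mathcal{F}_B=T_\nu(\mathcal{F}_B).
\end{align*}
The inequality $\dim(\mathcal{I}(\nu)\cap\mathcal{F}_B)\leq\dim\mathcal{F}_B$ is immediate from monotonicity of Hausdorff dimension, so only the reverse inequality needs an argument.

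First, I will verify the identity. Take $\xi\in\mathcal{F}_B$, so $\xi$ is irrational with $\xi=[b_1,b_2,\ldots]$ and all $b_i\leq B$. For irrational $\xi$, the value $T_\nu(\xi)=(p_n+\xi p_{n-1})/(q_n+\xi q_{n-1})$ is the number whose expansion is $\nu$ followed by $\xi$, namely $[\nu_1,\ldots,\nu_n,b_1,b_2,\ldots]$. This is the standard formula obtained by substituting the complete quotient $1/\xi$ into the $(n+1)$-th position. Since every $\nu_i\leq B$, this number is irrational, lies in $\mathcal{I}(\nu)$, and has all partial quotients in $\{1,\ldots,B\}$.

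Conversely, take $x\in\mathcal{I}(\nu)\cap\mathcal{F}_B$. Then $x$ is irrational, so by uniqueness of the expansion its first $n$ digits are exactly $\nu$. The point $T^n(x)$ is irrational, has digits $a_{n+1}(x),a_{n+2}(x),\ldots\leq B$, and satisfies $x=T_\nu(T^n(x))$. Hence $T^n(x)\in\mathcal{F}_B$, and the identity holds. The only minor care needed is at closure points of the cylinder, but these are rational and are excluded by $\mathcal{F}_B\subset[0,1)\setminus\mathbb{Q}$.

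Second, Lemma~\ref{lem:prefix-map-derivative} gives that $T_\nu$ is bi-Lipschitz on $[0,1]$ and therefore preserves Hausdorff dimension. This yields
\begin{align*}
    \dim(\mathcal{I}(\nu)\cap\mathcal{F}_B)=\dim T_\nu(\mathcal{F}_B)=\dim\mathcal{F}_B,
\end{align*}
which completes the argument.

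The main obstacle, though mild, is the identity $\mathcal{I}(\nu)\cap\mathcal{F}_B=T_\nu(\mathcal{F}_B)$, specifically the fact that $T_\nu$ acts as prefix concatenation on irrational points. Everything else is immediate from Lemma~\ref{lem:prefix-map-derivative}.
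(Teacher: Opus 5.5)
Your proposal is correct and follows the paper's proof: you establish $\mathcal{I}(\nu)\cap\mathcal{F}_B=T_\nu(\mathcal{F}_B)$ and then use the bi-Lipschitz property from Lemma~\ref{lem:prefix-map-derivative}. You also verify the reverse inclusion via $x=T_\nu(T^n(x))$, which the paper leaves implicit.
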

\begin{proof}
Pick any $n,B\in\mathbb{N}$ and $\nu\in\{1,\dots,B\}^n$. 
By the definition of the prefix map $T_\nu$, one obtains that for any $\xi\in{\mathcal{F}_B}$, $T_\nu(\xi)=[\nu_1,\ldots,\nu_n,a_1(\xi),a_2(\xi),\ldots]$; hence, $\mathcal{I}(\nu)\cap \mathcal{F}_B=T_\nu(\mathcal{F}_B)$. By Lemma~\ref{lem:prefix-map-derivative}, $T_\nu$ is bi-Lipschitz on $\mathcal{F}_B$ and preserves Hausdorff dimension.
\end{proof}

\begin{lemma}
\label{lem:holder-dimension-estimate}
Let $(X,d_X)$ and $(Y,d_Y)$ be metric spaces. Let $g:X\to Y$ be a surjective function. Suppose there exist $0<\alpha\leq 1$ and $C>0$ such that for any $x_1,x_2\in X$,
\begin{align*}
    d_Y(g(x_1),g(x_2))\leq C \left(d_X(x_1,x_2)\right)^\alpha.
\end{align*}
Then $\dim{Y}\leq\dim{X}/\alpha$.
\end{lemma}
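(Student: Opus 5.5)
The plan is to use the definition of Hausdorff dimension directly through covers. Fix $s>\dim X$; the case $\dim X=\infty$ is trivial, and if $\dim X<s$ then $\mathcal{H}^s(X)=0$. I will show that $\mathcal{H}^{s/\alpha}(Y)=0$. Since $s>\dim X$ is arbitrary, this gives $\dim Y\leq s/\alpha$ for all such $s$, and letting $s\downarrow\dim X$ yields $\dim Y\leq\dim X/\alpha$.

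The key step is the transfer of covers. Fix $\delta>0$ and $\varepsilon>0$. Since $\mathcal{H}^s(X)=0$, there is a countable cover $\{U_i\}$ of $X$ with $\diam U_i<\delta$ and $\sum_i(\diam U_i)^s<\varepsilon$. Because $g$ is surjective, the sets $\{g(U_i)\}$ cover $Y$. The Hölder hypothesis gives
\begin{align*}
    \diam g(U_i)\leq C(\diam U_i)^\alpha<C\delta^\alpha,
\end{align*}
and therefore
\begin{align*}
    \sum_i\left(\diam g(U_i)\right)^{s/\alpha}\leq C^{s/\alpha}\sum_i(\diam U_i)^s<C^{s/\alpha}\varepsilon.
\end{align*}
Hence $\mathcal{H}^{s/\alpha}_{C\delta^\alpha}(Y)\leq C^{s/\alpha}\varepsilon$. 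Letting $\varepsilon\to0$ and then $\delta\to0$ (so that $C\delta^\alpha\to0$) gives $\mathcal{H}^{s/\alpha}(Y)=0$.

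There is no real obstacle. The only points to check are bookkeeping: the estimate $\diam g(U)\leq C(\diam U)^\alpha$ must hold for arbitrary subsets $U$, which it does by taking the supremum over pairs of points; empty or singleton sets cause no trouble; and surjectivity is used exactly once, to ensure that the images $g(U_i)$ cover all of $Y$.
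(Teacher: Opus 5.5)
Your proposal is correct and follows essentially the same argument as the paper: take $s>\dim X$, use $\mathcal{H}^s(X)=0$ to obtain fine covers of $X$ with small $s$-sums, push them forward through the surjection $g$, and use $\diam g(U)\leq C(\diam U)^\alpha$ to conclude $\mathcal{H}^{s/\alpha}(Y)=0$. Keeping the mesh $\delta$ separate from the sum bound $\varepsilon$, and noting the case $\dim X=\infty$, is slightly cleaner bookkeeping than the paper's single parameter $\varepsilon$, but the route is the same.
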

\begin{proof}
Pick any $s>\dim{X}$ and $\varepsilon>0$. Since $\mathcal{H}^s(X)=0$, there exists an open cover $\mathcal{U}_{\varepsilon}$ of $X$ such that $\sum_{U\in\mathcal{U}_{\varepsilon}}(\diam{U})^s<\varepsilon$ and for any $U\in\mathcal{U}_{\varepsilon}$, $\diam{U}<\varepsilon$. Since $g$ is surjective, $(g(U))_{U\in\mathcal{U}_{\varepsilon}}$ covers $Y$. Then for any $U\in\mathcal{U}_{\varepsilon}$, $\diam{g(U)}\leq C\,(\diam{U})^\alpha$, and
\begin{align*}
    \sum_{U\in\mathcal{U}_{\varepsilon}}(\diam{g(U)})^{s/\alpha}
    \leq C^{s/\alpha}\sum_{U\in\mathcal{U}_{\varepsilon}} (\diam{U})^s
    < C^{s/\alpha}\varepsilon.
\end{align*}
Hence, for any $s>\dim{X}$, $\mathcal{H}^{s/\alpha}(Y)=0$ and $\dim{Y}\leq s/\alpha$.
\end{proof}

The next lemma is a reformulation of~\cite[Lemma~2.5]{WangWu2011} in the present functional notation.
\begin{lemma}
\label{lem:fixed-multiple-self-neglecting}
Let $f:\mathbb{N}\to\mathbb{N}$. 
Suppose $f$ is admissible. 
Then for any $C\in\mathbb{N}$,
\begin{align*}
    \lim_{n\to\infty}\frac{f(n+C f(n))}{f(n)}
    =1.
\end{align*}
\end{lemma}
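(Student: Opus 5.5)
The plan is to prove the statement by induction on $C$, iterating the admissibility hypothesis $f(m+f(m))/f(m)\to1$ exactly $C$ times. Set $n_0\coloneqq n$ and $n_{k+1}\coloneqq n_k+f(n_k)$ for $k\geq0$. Since $f$ is non-decreasing and unbounded, $n_k\to\infty$ as $n\to\infty$ for each fixed $k$, and admissibility applied at $m=n_k$ gives $f(n_{k+1})/f(n_k)\to1$. Multiplying the first $C$ of these ratios gives $f(n_C)/f(n)\to1$. Since $f$ is non-decreasing, $f(n_k)\geq f(n)$ for every $k$, so $n_C=n+\sum_{k=0}^{C-1}f(n_k)\geq n+Cf(n)$.

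Monotonicity then yields the sandwich
\begin{align*}
    1\leq\frac{f(n+Cf(n))}{f(n)}\leq\frac{f(n_C)}{f(n)},
\end{align*}
and the right-hand side tends to $1$, which proves the claim.

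The only care needed is that each intermediate point $n_k$ tends to infinity uniformly enough to apply the limit. This holds because $n_k\geq n$, so for any $\varepsilon>0$, choosing $N$ with $f(m+f(m))/f(m)<1+\varepsilon$ for all $m\geq N$ gives $f(n_{k+1})<(1+\varepsilon)f(n_k)$ for every $n\geq N$ and every $k$. Hence $f(n_C)\leq(1+\varepsilon)^Cf(n)$ for all $n\geq N$, and letting $\varepsilon\to0$ finishes the argument. I do not expect a genuine obstacle here; the one point to check is the direction of the inequality $n_C\geq n+Cf(n)$, which uses monotonicity of $f$ rather than any upper bound on $f(n_k)$.
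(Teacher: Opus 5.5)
Your proof is correct. The chain $n_{k+1}=n_k+f(n_k)$ gives $n_C\geq n+Cf(n)$ by monotonicity, since each $f(n_k)\geq f(n)$ because $n_k\geq n$. Also, because every $n_k\geq n$, a single admissibility threshold $N$ controls all $C$ ratios at once. This gives $1\leq f(n+Cf(n))/f(n)\leq f(n_C)/f(n)<(1+\varepsilon)^C$ for all $n\geq N$, which is exactly what is needed.

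The paper gives no argument for this lemma at all; it only quotes it as a reformulation of Wang--Wu~\cite[Lemma~2.5]{WangWu2011}. Your iteration is therefore not a competing route but a short, self-contained replacement for that citation.

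Two small remarks:
\begin{itemize}
\item Unboundedness of $f$ is never used. The only property of the $n_k$ you need is $n_k\geq n$, which you observe yourself in the last paragraph, so the appeal to unboundedness in the first paragraph is superfluous.
\item What you call induction on $C$ is really a direct $C$-fold iteration, which is fine.
\end{itemize}
The same argument also gives the conclusion for any real $C>0$, by monotonicity and replacing $C$ with $\lceil C\rceil$.
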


\section{Construction of a Cantor-type set}

Let $\lambda\in\mathbb{N}$ and $f:\mathbb{N}\to\mathbb{N}$ be a function. Define $H_\lambda\coloneqq\lfloor{\log{\tau_{\lambda}}}/{\log{\varphi}}\rfloor+1\in\mathbb{N}$. Pick any $M\in\mathbb{N}$. Define $t_0\coloneqq 1$ and for any $k\in\mathbb{N}$,
\begin{align*}
    r_k\coloneqq f(t_{k-1}),\qquad
    m_k\coloneqq \left\lfloor\sqrt{r_k}\right\rfloor, \qquad
    g_k\coloneqq MH_\lambda r_k, \qquad
    p_k\coloneqq \left\lfloor \frac{g_k}{m_k}\right\rfloor, \qquad
    q_k\coloneqq g_k-p_km_k,
\end{align*}
and
\begin{align*}
    s_k\coloneqq f(t_{k-1}+g_k), \qquad 
    d_k\coloneqq 2p_k+2+s_k, \qquad 
    l_k\coloneqq g_k+d_k, \qquad 
    t_k\coloneqq t_{k-1}+l_k.
\end{align*}

Let $k\in\mathbb{N}$. The quantity $r_k$ is the target scale at the beginning of stage $k$, while $s_k$ is the target scale associated with the end of the retained free length $g_k$. The length $g_k$ specifies the number of free digits retained at stage $k$. The auxiliary length $m_k$ tends to infinity while satisfying $\lim_{k\to\infty}m_k/r_k=0$; it will be used to subdivide the free digits into shorter pieces, so that accidental runs remain negligible compared with the target scale. The integers $p_k$ and $q_k$ are the quotient and remainder obtained when the free length $g_k$ is divided into pieces of length $m_k$. Finally, $d_k$, $l_k$, and $t_k$ record respectively the non-free length to be added at stage $k$, the total length of stage $k$, and the starting position of stage $k+1$.

\begin{prop}
\label{prop:scale-inputs-final}
Let $\lambda\in\mathbb{N}$ and $f:\mathbb{N}\to\mathbb{N}$ be a function. 
Suppose $f$ is admissible. Then for any $M\in\mathbb{N}$,
\begin{align}
\label{eq:scale-inputs-final}
    \lim_{k\to\infty}\frac{s_k}{r_k}
    =\lim_{k\to\infty}\frac{s_{k-1}}{r_k}
    =1,
    \qquad
    \lim_{k\to\infty}\frac{p_k}{r_k}=0,
\end{align}
and there exists $K_{\lambda,f,M}\in\mathbb{N}$ such that for any $k\in\mathbb{N}$ if $k\geq K_{\lambda,f,M}$ then
\begin{align}
\label{eq:ell-eventual-bound-final}
    l_k\leq (MH_\lambda+2)\,r_k.
\end{align}
\end{prop}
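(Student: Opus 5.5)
Since $f$ is non-decreasing and unbounded and $t_k$ is strictly increasing with $t_k\to\infty$, the sequence $r_k=f(t_{k-1})$ tends to infinity. Consequently $m_k=\lfloor\sqrt{r_k}\rfloor\geq 1$ for all $k$, and $m_k\to\infty$. I will prove the three limits in the order $s_k/r_k$, then $p_k/r_k$, then $s_{k-1}/r_k$, and finally derive \eqref{eq:ell-eventual-bound-final}.

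\emph{First limit.} Here $s_k=f(t_{k-1}+g_k)$ with $g_k=MH_\lambda f(t_{k-1})$. Apply Lemma~\ref{lem:fixed-multiple-self-neglecting} with $C=MH_\lambda\in\mathbb{N}$ and $n=t_{k-1}\to\infty$. This gives $s_k/r_k\to1$, and monotonicity gives $s_k\geq r_k$.

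\emph{Limit for $p_k$.} We have $p_k\leq g_k/m_k=MH_\lambda r_k/\lfloor\sqrt{r_k}\rfloor$. Hence $p_k/r_k\leq MH_\lambda/\lfloor\sqrt{r_k}\rfloor\to0$.

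\emph{Second limit.} By monotonicity, $s_{k-1}=f(t_{k-2}+g_{k-1})\leq f(t_{k-2}+l_{k-1})=f(t_{k-1})=r_k$. For the reverse direction I write
\begin{align*}
    t_{k-1}=t_{k-2}+g_{k-1}+d_{k-1},\qquad d_{k-1}=2p_{k-1}+2+s_{k-1}.
\end{align*}
By the two limits already proved, applied at index $k-1$, we have $d_{k-1}\leq 2s_{k-1}$ for large $k$, since $p_{k-1}=o(r_{k-1})$ and $r_{k-1}\to\infty$. Set $n'=t_{k-2}+g_{k-1}$, so that $s_{k-1}=f(n')$. Then $t_{k-1}\leq n'+2f(n')$, and Lemma~\ref{lem:fixed-multiple-self-neglecting} with $C=2$ gives
\begin{align*}
    r_k=f(t_{k-1})\leq f(n'+2f(n'))=(1+o(1))f(n')=(1+o(1))s_{k-1}.
\end{align*}
Here $n'\to\infty$ is used. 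Combining the two directions gives $s_{k-1}/r_k\to1$.

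\emph{The bound on $l_k$.} We have
\begin{align*}
    l_k=g_k+2p_k+2+s_k=MH_\lambda r_k+s_k+2p_k+2.
\end{align*}
By the first two limits, $(s_k+2p_k+2)/r_k\to1<2$, so eventually $s_k+2p_k+2\leq 2r_k$. Choosing $K_{\lambda,f,M}$ beyond this threshold yields \eqref{eq:ell-eventual-bound-final}.

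The main obstacle is the step $s_{k-1}/r_k\to1$. There the gap between $t_{k-2}+g_{k-1}$ and $t_{k-1}$ must be controlled by a fixed multiple of $f$ evaluated at the smaller point, so the recursive bound $d_{k-1}=O(s_{k-1})$ has to be established before Lemma~\ref{lem:fixed-multiple-self-neglecting} can be applied with a constant $C$.
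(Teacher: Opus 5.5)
Your proof is correct, and for the first limit, for $p_k/r_k\le MH_\lambda/m_k\to0$, and for the bound on $l_k$ it matches the paper. The only difference is in the step $s_{k-1}/r_k\to1$.

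\emph{The paper's route.} It first proves \eqref{eq:ell-eventual-bound-final} and deduces $t_{k-1}\le t_{k-2}+(MH_\lambda+2)f(t_{k-2})$. It then applies Lemma~\ref{lem:fixed-multiple-self-neglecting} at $t_{k-2}$ with $C=MH_\lambda+2$ to get $r_{k-1}/r_k\to1$. Finally it writes $s_{k-1}/r_k=(s_{k-1}/r_{k-1})(r_{k-1}/r_k)$.

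\emph{Your route.} You sandwich $r_k$ directly. The lower bound $s_{k-1}\le r_k$ comes free from monotonicity, since $g_{k-1}\le l_{k-1}$. For the upper bound you check $d_{k-1}\le 2s_{k-1}$ and apply the lemma with the fixed constant $C=2$ at the point $n'=t_{k-2}+g_{k-1}$, which is exactly where $s_{k-1}$ is evaluated.

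\emph{Comparison.} Your argument is slightly more economical: that step does not rely on the $l_k$ bound, and its constant does not depend on $M$ or $\lambda$. The paper's argument gives, as a by-product, the asymptotic equality $r_{k-1}/r_k\to1$ of consecutive target scales. It also reuses the $l_k$ bound, which it needs again in Proposition~\ref{prop:run-control-final}.

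A minor point: $m_k\ge1$ holds simply because $r_k=f(t_{k-1})\ge1$, not as a consequence of $r_k\to\infty$.
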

\begin{proof}
Since $f$ is unbounded and non-decreasing, one obtains that $(t_k)_{k\in\mathbb{N}}$ is strictly increasing and $\lim_{k\to\infty}r_k=\lim_{k\to\infty}f(t_{k-1})=+\infty$. By Lemma~\ref{lem:fixed-multiple-self-neglecting}, one obtains
\begin{align*}
    \lim_{k\to\infty}\frac{f(t_{k-1}+MH_\lambda f(t_{k-1}))}{f(t_{k-1})}=1;
\end{align*}
hence, $\lim_{k\to\infty}{s_k}/{r_k}=1$. Since $\lim_{k\to\infty}r_k=+\infty$, one obtains $\lim_{k\to\infty}m_k=+\infty$.

Since for any $k\in\mathbb{N}$, $p_km_k\leq g_k=MH_\lambda r_k$, one obtains $0\leq {p_k}/{r_k}\leq {MH_\lambda}/{m_k}$; hence, $\lim_{k\to\infty}{p_k}/{r_k}=0$. Note that for any $k\in\mathbb{N}$,
\begin{align*}
    \frac{l_k}{r_k}
    =
    \frac{g_k+d_k}{r_k}
    =
    MH_\lambda+\frac{2p_k}{r_k}+\frac{2}{r_k}+\frac{s_k}{r_k};
\end{align*}
hence, $\lim_{k\to\infty}{l_k}/{r_k}=MH_\lambda+1$ and there exists $K_{\lambda,f,M}\in\mathbb{N}$ such that for any $k\in\mathbb{N}$ if $k\geq K_{\lambda,f,M}$ then~\eqref{eq:ell-eventual-bound-final} holds.

One obtains that for any $k\in\mathbb{N}$ if $k\geq K_{\lambda,f,M}+1$ then 
\begin{align*}
    t_{k-1}
    =t_{k-2}+l_{k-1}
    \leq t_{k-2}+(MH_\lambda+2)\,r_{k-1}
    =t_{k-2}+(MH_\lambda+2)\,f(t_{k-2}),
\end{align*}
and
\begin{align*}
    1\leq \frac{f(t_{k-1})}{f(t_{k-2})}\le
    \frac{f(t_{k-2}+(MH_\lambda+2)\,f(t_{k-2}))}{f(t_{k-2})}.
\end{align*}
By Lemma~\ref{lem:fixed-multiple-self-neglecting}, one obtains $\lim_{k\to\infty}{f(t_{k-1})}/{f(t_{k-2})}=1$; hence, $\lim_{k\to\infty}{r_{k-1}}/{r_k}=1$. Thus,
\begin{align*}
    \lim_{k\to\infty}\frac{s_{k-1}}{r_k}
    =\lim_{k\to\infty}\frac{s_{k-1}}{r_{k-1}}\lim_{k\to\infty}\frac{r_{k-1}}{r_k}
    =1.
\end{align*}
\end{proof}

Let $\lambda\in\mathbb{N}$ and $f:\mathbb{N}\to\mathbb{N}$ be a function. Pick any $M,B\in\mathbb{N}$. Suppose $B\geq\lambda+2$.
Define, for any $k\in\mathbb N$ and $U_k\in\{1,\dots,B\}^{g_k}$, the expanded block $U^+_k\in\{1,\dots,B\}^{l_k}$ by:
\begin{align*}
    U^+_k
    \coloneqq
    U_{k,1}\sigma_1\sigma_2U_{k,2}\sigma_1\sigma_2\cdots U_{k,p_k}\sigma_1\sigma_2U_{k,p_k+1}\sigma_1\lambda^{s_k}\sigma_2,
\end{align*}
where $\sigma_1\coloneqq \lambda+1$ and $\sigma_2\coloneqq \lambda+2$ are the separator digits, $U_{k,1},\dots,U_{k,p_k}\in \{1,\dots,B\}^{m_k}$ and $U_{k,p_k+1}\in \{1,\dots,B\}^{q_k}$ are the unique words satisfying 
\begin{align*}
    U_k=U_{k,1}U_{k,2}\cdots U_{k,p_k}U_{k,p_k+1},
\end{align*}
with the convention that $\{1,\ldots,B\}^0\coloneqq \{\emptyset\}$. Define
\begin{align*}
    \mathcal{K}_{\lambda,f,M,B}
    \coloneqq\left\{\left[U^+_1U^+_2U^+_3\cdots\right]:
    U_k\in\{1,\dots,B\}^{g_k}\text{ for all $k\in\mathbb{N}$.}\right\}
    \subset\mathcal{F}_B.
\end{align*}
Define $\zeta_{\lambda,f,M,B}:\mathcal{F}_B\to\mathcal{K}_{\lambda,f,M,B}$ by for any $y\in \mathcal{F}_B$,
\begin{align*}
    \zeta_{\lambda,f,M,B}(y)
    \coloneqq\left[U^+_{y,1}U^+_{y,2}U^+_{y,3}\cdots\right],
\end{align*}
where the partial quotients of $y$ are partitioned into consecutive words $U_{y,1},U_{y,2},U_{y,3},\ldots$ so that $y=[U_{y,1}U_{y,2}U_{y,3}\cdots]$ and for any $k\in\mathbb{N}$, $U_{y,k}\in\{1,\dots,B\}^{g_k}$. 

Define
\begin{align*}
    \mathfrak{F}_{\lambda,f}
    &\coloneqq
    \left\{x\in[0,1)\setminus\mathbb{Q}:
    \lim_{n\to\infty}\frac{L_n(x,\lambda)}{f(n)}=1
    \text{ and }\right.\\
    &\qquad\qquad\left.\text{for all sufficiently large $n\in\mathbb{N}$ and for any $\mu\in\mathbb{N}\setminus\{\lambda\}$, }\frac{L_n(x,\lambda)}{L_n(x,\mu)}>1\right\}.
\end{align*}

\begin{prop}
\label{prop:run-control-final}
Let $\lambda\in\mathbb{N}$ and $f:\mathbb{N}\to\mathbb{N}$ be a function. Suppose $f$ is admissible. Then for any $M,B\in\mathbb{N}$, if $B\geq \lambda+2$ then
\begin{align*}
    \mathcal{K}_{\lambda,f,M,B}\subset \mathfrak{F}_{\lambda,f}.
\end{align*}
\end{prop}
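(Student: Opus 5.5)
Fix $x=[U^+_1U^+_2\cdots]\in\mathcal{K}_{\lambda,f,M,B}$. The plan is to read off every run of $x$ from the block structure. Stage $k$ occupies positions $t_{k-1}+1,\dots,t_k$. Inside $U^+_k$, each free piece $U_{k,i}$ has length at most $m_k$ and is followed by $\sigma_1\sigma_2$. The final piece is followed by $\sigma_1\lambda^{s_k}\sigma_2$. Since $\sigma_1\neq\sigma_2$, and each of them differs from $\lambda$, no run of any symbol crosses a separator pair except by at most one digit. The same holds across stage boundaries, because $U^+_k$ ends in $\sigma_2$ and $U_{k+1,1}$ begins freshly.

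\textbf{Step 1: run bounds.} Every maximal run lies inside a free piece, possibly extended by one separator digit, or is the planted block $\lambda^{s_k}$. The planted block is flanked by $\sigma_1$ and $\sigma_2$, so it is a maximal run of length exactly $s_k$. Every other run, of any symbol $\mu$, has length at most $m_k+1$ within stage $k$. If $\mu\ne\lambda$, a run cannot absorb the planted block. From this I would derive bounds for $t_{k-1}<n\le t_k$. For the lower bound, $L_n(x,\lambda)\ge s_{k-1}$. For the upper bound, $L_n(x,\lambda)\le\max(s_k,\max_{j\le k}(m_j+1))$. For every $\mu\ne\lambda$, $L_n(x,\mu)\le \max_{j\le k}(m_j+1)=m_k+1$, using that $m_j$ is non-decreasing because $f$ is.

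\textbf{Step 2: the limit $L_n(x,\lambda)/f(n)\to1$.} For $n$ in stage $k$, monotonicity of $f$ gives $r_k=f(t_{k-1})\le f(n)\le f(t_k)=r_{k+1}$. I would make the upper bound sharper: $L_n(x,\lambda)=s_k$ can only occur once $n$ has passed the planted block, and then $n> t_{k-1}+g_k$, so $f(n)\ge s_k$. Before that point, $L_n(x,\lambda)\le\max(s_{k-1},m_k+1)$ and $f(n)\ge r_k$. In both cases $L_n(x,\lambda)/f(n)\le\max(1,s_{k-1}/r_k,(m_k+1)/r_k)$. For the lower bound, $L_n(x,\lambda)/f(n)\ge s_{k-1}/r_{k+1}$. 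By Proposition~\ref{prop:scale-inputs-final} we have $s_{k-1}/r_k\to1$. Its proof also gives $r_k/r_{k+1}\to1$, and $m_k/r_k\to0$. Both ratios therefore tend to $1$.

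\textbf{Step 3: dominance for large $n$.} For large $k$, $L_n(x,\lambda)\ge s_{k-1}$ with $s_{k-1}\sim r_k$, while $L_n(x,\mu)\le m_k+1\le\sqrt{r_k}+1$. Hence $L_n(x,\lambda)>L_n(x,\mu)$ for every $\mu\ne\lambda$ once $k$ is large. If $L_n(x,\mu)=0$, the ratio is $+\infty$, which is also $>1$.

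The main obstacle is the separator bookkeeping in Step 1. I need to check that a $\lambda$-run inside a free piece cannot merge with $\lambda^{s_k}$, which holds because $\sigma_1$ sits in between. I also need to handle small-$k$ edge cases such as $p_k=0$ or $q_k=0$. Finally, the upper bound in Step 2 requires the case split on whether $n$ has reached the planted block, since $s_k$ may exceed $f(n)$ earlier in the stage.
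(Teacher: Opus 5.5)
Your proposal is correct and follows essentially the paper's argument: you bound every non-planted run by $m_k+1$ using the separators, sandwich $L_n(x,\lambda)$ between $s_{k-1}$ and $s_k$ on stage $k$, and compare $f(n)$ with $r_k$ via admissibility (your bound $f(n)\le r_{k+1}$ with $r_{k+1}/r_k\to1$ is the same estimate the paper uses); dominance then follows from $m_k+1=o(r_k)$. The case split in Step~2 is unnecessary, since $L_n(x,\lambda)\le\max\{s_k,m_k+1\}$, $f(n)\ge r_k$ and $s_k/r_k\to1$ already give the upper bound. If you keep the split, place it where $n$ enters the planted block, which still gives $n>t_{k-1}+g_k$, because a partial planted run can exceed $\max\{s_{k-1},m_k+1\}$ before the block is completed. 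A harmless indexing slip: stage $k$ occupies positions $t_{k-1},\dots,t_k-1$, not $t_{k-1}+1,\dots,t_k$.
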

\begin{proof}
Pick any $x\in \mathcal{K}_{\lambda,f,M,B}$. Since $\mathcal{K}_{\lambda,f,M,B}=\zeta_{\lambda,f,M,B}(\mathcal{F}_B)$, there exists $y\in \mathcal{F}_B$ such that $x=\zeta_{\lambda,f,M,B}(y)$. For any $k\in\mathbb{N}$, the expanded block $U^+_{y,k}$ has the following form
\begin{align*}
    U^+_{y,k}
    =U_{y,k,1}\sigma_1\sigma_2U_{y,k,2}\sigma_1\sigma_2\cdots U_{y,k,p_k}\sigma_1\sigma_2U_{y,k,p_k+1}\sigma_1\lambda^{s_k}\sigma_2,
\end{align*}
where $U_{y,k,1},\dots,U_{y,k,p_k}\in \{1,\dots,B\}^{m_k}$ and $U_{y,k,p_k+1}\in \{1,\dots,B\}^{q_k}$ are the unique words satisfying $U_{y,k}=U_{y,k,1}U_{y,k,2}\cdots U_{y,k,p_k}U_{y,k,p_k+1}$. Pick any $k\in\mathbb{N}$. Every consecutive run in the first $t_k-1$ digits of $x$ which is not contained in one of the prescribed words $\lambda^{s_1},\ldots,\lambda^{s_k}$ lies either inside one of the words $U_{y,j,i}$ for some $j\in\{1,\ldots,k\}$ and $i\in\{1,\ldots,p_j+1\}$, or across a boundary of one of the forms
\begin{align*}
    U_{y,j,i}\sigma_1,\qquad
    \sigma_2U_{y,j,i+1},\qquad
    U_{y,j,p_j+1}\sigma_1,\qquad
    \sigma_2U_{y,\min\{j+1,k\},1},
\end{align*}
for some $j\in\{1,\ldots,k\}$ and $i\in\{1,\ldots,p_j\}$. Since $f$ is non-decreasing, the sequence $(m_k)_{k\in\mathbb{N}}$ is non-decreasing. Hence, each such consecutive run has length at most $m_k+1$. 

Since $\lim_{k\to\infty}r_k=\lim_{k\to\infty}m_k=+\infty$ and for any $k\in\mathbb{N}$, 
\begin{align*}
    0
    \leq \frac{m_k+1}{r_k}=\frac{m_k}{r_k}+\frac{1}{r_k}
    \leq \frac{1}{m_k}+\frac{1}{r_k},
\end{align*}
one obtains $\lim_{k\to\infty}{(m_k+1)}/{r_k}=0$. By~\eqref{eq:scale-inputs-final}, one obtains $\lim_{k\to\infty}{(m_k+1)}/{s_{k-1}}=0$ and that there exists $k_0\in\mathbb{N}$ such that $k_0\geq2$ and for any $k\in\mathbb{N}$, if $k\geq k_0$ then $m_k+1<s_{k-1}$. 

One obtains that for any $k\in\mathbb{N}$, $t_k=t_{k-1}+l_k>t_{k-1}+g_k$; hence, the sequence $(t_{k-1}+g_k)_{k\in\mathbb{N}}$ is strictly increasing. Since $f$ is non-decreasing, the sequence $(s_k)_{k\in\mathbb{N}}$ is unbounded and non-decreasing. One obtains that there exists $k_1\in\mathbb{N}$ such that $k_1\geq k_0+1$ and for any $k\in\mathbb{N}$, if $k\geq k_1$ then
\begin{align*}
    C_0\coloneqq\max{\left\{s_{k_0-1},m_{k_0-1}+1\right\}}<s_{k-1}.
\end{align*}

Pick any $k,n\in\mathbb{N}$. Suppose $k\geq k_1$ and $t_{k-1}\leq n< t_k$. Since the first $n$ partial quotients of $x$ contain the full block $\lambda^{s_{k-1}}$, one obtains $L_n(x,\lambda)\geq s_{k-1}$. On the other hand, every $\lambda$-run outside the prescribed blocks has length at most
\begin{align*}
    \max{\left\{C_0,m_k+1\right\}}<s_{k-1}\leq s_k.
\end{align*}
Every prescribed block from an earlier stage has length at most $s_{k-1}$, and the part of the current prescribed block already contained in the first $n$ partial quotients of $x$ has length at most $s_k$. Therefore, for any $k,n\in\mathbb{N}$, if $k\geq k_1$ and $t_{k-1}\leq n< t_k$ then
\begin{align}
    \label{eq:Ln-sandwich-final}
    s_{k-1}
    \leq L_n(x,\lambda)
    \leq s_k.
\end{align}

By Proposition~\ref{prop:scale-inputs-final}, there exists $K_{\lambda,f,M}\in\mathbb{N}$ such that for any $k\in\mathbb{N}$, if $k\geq K_{\lambda,f,M}$ then 
\begin{align*}
    t_k=t_{k-1}+l_k
    \leq t_{k-1}+(MH_\lambda+2)\,r_k
    =t_{k-1}+(MH_\lambda+2)\, f(t_{k-1}).
\end{align*}
One obtains that for any $k,n\in\mathbb{N}$, if $k\geq K_{\lambda,f,M}$ and $t_{k-1}\leq n< t_k$ then
\begin{align*}
    t_{k-1}\leq n< t_k\leq t_{k-1}+(MH_\lambda+2)\,f(t_{k-1});
\end{align*}
hence, one obtains from $f$ being non-decreasing that
\begin{align*}
    f(t_{k-1})
    \leq f(n)
    \leq f(t_{k-1}+(MH_\lambda+2)\,f(t_{k-1})).
\end{align*}
One obtains from Lemma~\ref{lem:fixed-multiple-self-neglecting} that
\begin{align*}
    \lim_{k\to\infty}\frac{f(t_{k-1}+(MH_\lambda+2)\,f(t_{k-1}))}{f(t_{k-1})}
    =1.
\end{align*}
By the definition of $(r_k)_{k\in\mathbb{N}}$ and the squeeze theorem, one obtains
\begin{align*}
    \lim_{k\to\infty}\sup_{n\in\{t_{k-1},\ldots,t_k-1\}}
    \left|\frac{f(n)}{r_k}-1\right|
    =0.
\end{align*}
Therefore, one obtains from combining~\eqref{eq:scale-inputs-final} and~\eqref{eq:Ln-sandwich-final} that
\begin{align*}
    \lim_{n\to\infty}\frac{L_n(x,\lambda)}{f(n)}=1.
\end{align*}

One remains to prove eventual dominance. Pick any $k,n\in\mathbb{N}$ and $\mu\in\mathbb{N}\setminus\{\lambda\}$. Suppose $k\geq k_1$ and $t_{k-1}\leq n< t_k$. Every $\mu$-run in the first $n$ partial quotients of $x$ is necessarily not one of the prescribed $\lambda$-blocks. Since $\mu\neq\lambda$, no $\mu$-run can be contained in a prescribed block $\lambda^{s_j}$. Hence, its length is bounded by the maximal length of a non-prescribed run, and one obtains
\begin{align*}
    L_n(x,\mu)\leq\max{\{C_0,m_k+1\}}<s_{k-1}\leq L_n(x,\lambda).
\end{align*}
Therefore, one obtains that for any $n\in\mathbb N$ and $\mu\in\mathbb{N}\setminus\{\lambda\}$, if $n\geq N_{\lambda,f,M}\coloneqq t_{k_1-1}$, then
\begin{align*}
    L_n(x,\lambda)>L_n(x,\mu).
\end{align*}
\end{proof}

\section{Continuant distortion}

Let $\lambda\in\mathbb{N}$. Define $C_\lambda\coloneqq8(\lambda+1)(\lambda+2)$. Let $M\in\mathbb{N}$ and $0<\alpha<M/(M+1)$. Note that $(1-\alpha)M>\alpha$ and $H_\lambda\log{\varphi}>\log{\tau_\lambda}$. Define
\begin{align*}
    \eta_{\lambda,M,\alpha}
    \coloneqq\frac{(1-\alpha)MH_\lambda\log\varphi-\alpha\log \tau_\lambda}{4}>0.
\end{align*}

\begin{lemma}
\label{lem:full-stage-comparison}
Let $\lambda\in\mathbb{N}$ and $f:\mathbb{N}\to\mathbb{N}$ be a function. Suppose $f$ is admissible. Then for any $M,B\in\mathbb{N}$ and $0<\alpha<M/(M+1)$, there exists $k_{\lambda,f,M,\alpha,1}\in\mathbb{N}$ such that for any $k\in\mathbb{N}$, if $k\geq k_{\lambda,f,M,\alpha,1}$ then for any $U_k\in\{1,\ldots,B\}^{g_k}$,
\begin{align*}
    K(U^+_k)^\alpha
    \leq 2^{-2\alpha}K(U_k).
\end{align*}
\end{lemma}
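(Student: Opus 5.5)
The plan is to bound $K(U^+_k)$ from above by $K(U_k)$ times an explicit factor using Lemma~\ref{lem:continuant-concatenation}, and to bound $K(U_k)$ from below by Lemma~\ref{lem:continuant-growth}. The claimed inequality then reduces to a comparison of linear functions of $r_k$, which Proposition~\ref{prop:scale-inputs-final} settles for large $k$.

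\emph{Upper bound.} Write $U^+_k$ as the concatenation
\begin{align*}
U_{k,1}(\sigma_1\sigma_2)U_{k,2}(\sigma_1\sigma_2)\cdots U_{k,p_k}(\sigma_1\sigma_2)U_{k,p_k+1}\,\sigma_1\,\lambda^{s_k}\,\sigma_2 .
\end{align*}
Applying the upper inequality of Lemma~\ref{lem:continuant-concatenation} repeatedly costs one factor $2$ per concatenation.
\begin{itemize}
\item Each separator pair satisfies $K(\sigma_1\sigma_2)=\sigma_1\sigma_2+1\le 2(\lambda+1)(\lambda+2)$.
\item Each pair is responsible for at most two concatenations: one to attach the pair and one to attach the next free piece.
\item So each pair contributes at most $8(\lambda+1)(\lambda+2)=C_\lambda$.
\end{itemize}
The tail $\sigma_1\lambda^{s_k}\sigma_2$ contributes at most $(\lambda+1)(\lambda+2)\tau_\lambda^{s_k}$, using $K(\lambda^{s_k})\le\tau_\lambda^{s_k}$ from Lemma~\ref{lem:continuant-growth}. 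It also brings a few more factors of $2$. All of this is bounded by $C_\lambda^2\tau_\lambda^{s_k}$.

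The free pieces contribute $\prod_i K(U_{k,i})$. By the lower inequality of Lemma~\ref{lem:continuant-concatenation}, this product is at most $K(U_{k,1}\cdots U_{k,p_k+1})=K(U_k)$. Altogether,
\begin{align*}
K(U^+_k)\le C_\lambda^{p_k+2}\,\tau_\lambda^{s_k}\,K(U_k).
\end{align*}

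\emph{Reduction.} With this bound, the claim follows once
\begin{align*}
K(U_k)^{1-\alpha}\ge 2^{2\alpha}\bigl(C_\lambda^{p_k+2}\tau_\lambda^{s_k}\bigr)^{\alpha}
\end{align*}
holds. By Lemma~\ref{lem:continuant-growth}, $K(U_k)\ge\varphi^{g_k-1}=\varphi^{MH_\lambda r_k-1}$. Taking logarithms, it suffices that
\begin{align*}
(1-\alpha)(MH_\lambda r_k-1)\log\varphi\ \ge\ \alpha\bigl((p_k+2)\log C_\lambda+s_k\log\tau_\lambda+2\log 2\bigr).
\end{align*}
This bound is uniform in $U_k$, since only $g_k$ and the fixed separators enter.

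\emph{Asymptotic comparison.} Divide both sides by $r_k$. By Proposition~\ref{prop:scale-inputs-final}, $p_k/r_k\to0$ and $s_k/r_k\to1$, and $r_k\to\infty$. Hence the left side tends to $(1-\alpha)MH_\lambda\log\varphi$ and the right side tends to $\alpha\log\tau_\lambda$. Their difference is $4\eta_{\lambda,M,\alpha}>0$. Therefore there exists $k_{\lambda,f,M,\alpha,1}$ beyond which, say, the left side exceeds the right side by at least $2\eta_{\lambda,M,\alpha}r_k$, which is positive.

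The only delicate point is the bookkeeping of the concatenation factors $2$, which must be absorbed into the constant $C_\lambda$ per separator. Any overcount is harmless anyway, since it only adds $O(p_k)=o(r_k)$ to the logarithm.
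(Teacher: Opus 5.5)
Your argument is correct and is essentially the paper's proof. You bound $K(U^+_k)$ above by a power of $C_\lambda$ times $\tau_\lambda^{s_k}K(U_k)$ via Lemmas~\ref{lem:continuant-concatenation} and~\ref{lem:continuant-growth}, bound $K(U_k)\geq\varphi^{g_k-1}$ from below, and use Proposition~\ref{prop:scale-inputs-final} to get a logarithmic deficit of at least $2\eta_{\lambda,M,\alpha}r_k$ for large $k$. Your exponent $p_k+2$, versus the paper's $p_k+1$, is a harmless overcount: the tail actually costs only $2^3(\lambda+1)(\lambda+2)\tau_\lambda^{s_k}=C_\lambda\tau_\lambda^{s_k}$.
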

\begin{proof}
Pick any $k\in\mathbb{N}$. By Lemma~\ref{lem:continuant-concatenation}, one obtains
\begin{align*}
    K(U^+_k)
    \leq
    {C_\lambda}^{p_k+1}K(\lambda^{s_k})K(U_k).
\end{align*}
By Lemma~\ref{lem:continuant-growth}, one obtains
\begin{align*}
    \log{\frac{K(U^+_k)^\alpha}{K(U_k)}}
    \leq
    \alpha(p_k+1)\log C_\lambda
    +\alpha s_k\log\tau_\lambda
    -(1-\alpha)(g_k-1)\log\varphi.
\end{align*}
By the definition of $(g_k)_{k\in\mathbb{N}}$, one obtains
\begin{align*}
    \log{\frac{K(U^+_k)^\alpha}{K(U_k)}}
    \leq
    \alpha(p_k+1)\log C_\lambda+\alpha(s_k-r_k)\log\tau_\lambda
    -4\eta_{\lambda,M,\alpha} r_k+(1-\alpha)\log{\varphi}.
\end{align*}
By Proposition~\ref{prop:scale-inputs-final} and $\lim_{k\to\infty}r_k=+\infty$, one obtains that for all sufficiently large $k\in\mathbb{N}$, 
\begin{align*}
    \log{\frac{K(U^+_k)^\alpha}{K(U_k)}}\leq-2\eta_{\lambda,M,\alpha} r_k\leq -2\alpha\log 2.
\end{align*}
The result follows after exponentiating.
\end{proof}

\begin{lemma}
\label{lem:stage-prefix-comparison}
Let $\lambda\in\mathbb{N}$ and $f:\mathbb{N}\to\mathbb{N}$ be a function. Suppose $f$ is admissible. Then for any $M,B\in\mathbb{N}$ and $0<\alpha<M/(M+1)$, there exist $C_{\lambda,f,M,\alpha,2}>0$ and $k_{\lambda,f,M,\alpha,2}\in\mathbb N$ such that for any $k\in\mathbb{N}$ and $U_k\in\{1,\ldots,B\}^{g_k}$, if $k\geq k_{\lambda,f,M,\alpha,2}$ then for any prefix $W$ of $U^+_k$,
\begin{align*}
    K(W)^\alpha\leq C_{\lambda,f,M,\alpha,2}\,K(W^-),
\end{align*}
where $W^-$ is the underlying word obtained from $W$ by deleting all inserted symbols $\sigma_1,\sigma_2$, as specified by their positions in the construction, not all occurrences of the numerical values $\lambda+1$ or $\lambda+2$, and any terminal segment of the form $\sigma_1\lambda^u \sigma_2$ or $\sigma_1\lambda^u$ or $\sigma_1$ coming from the final inserted block.
\end{lemma}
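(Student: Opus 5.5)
A prefix $W$ of $U^+_k$ has the form $W=U_{k,1}\sigma_1\sigma_2\cdots U_{k,i-1}\sigma_1\sigma_2\,V$, where either $V$ is a prefix of $U_{k,i}$ possibly followed by $\sigma_1$, or $i=p_k+1$ and $V$ is a prefix of $U_{k,p_k+1}\sigma_1\lambda^{s_k}\sigma_2$. Then $W^-=U_{k,1}\cdots U_{k,i-1}V^-$, where $V^-$ is the free part of $V$. The plan is to bound $K(W)$ above by $K(W^-)$ times a factor coming from the inserted symbols, and then to absorb the $\alpha$-power using the free digits. In parallel with the proof of Lemma~\ref{lem:full-stage-comparison}, repeated use of Lemma~\ref{lem:continuant-concatenation} gives
\begin{align*}
    K(W)\leq C_\lambda^{\,j+1}\,K(\lambda^{u})\,K(W^-),
\end{align*}
where $j\leq p_k$ is the number of inserted pairs $\sigma_1\sigma_2$ in $W$ and $u\leq s_k$ is the length of the inserted $\lambda$-run in $W$. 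Each inserted symbol contributes a factor at most $2(\lambda+2)$, and the constant $C_\lambda$ absorbs one pair.

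Raising to the power $\alpha$, it suffices to show
\begin{align*}
    K(W)^\alpha/K(W^-)\leq C_\lambda^{\alpha(p_k+1)}\tau_\lambda^{\alpha u}\,K(W^-)^{-(1-\alpha)}
\end{align*}
is bounded. By Lemma~\ref{lem:continuant-growth}, $K(W^-)\geq\varphi^{|W^-|-1}$. There are two cases.

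\emph{Case 1: $u>0$.} Then $W$ contains all of $U_k$, so $|W^-|=g_k=MH_\lambda r_k$. The computation of Lemma~\ref{lem:full-stage-comparison}, with $s_k$ replaced by $u\leq s_k$, gives the bound $-2\eta_{\lambda,M,\alpha}r_k<0$ for $k$ large.

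\emph{Case 2: $u=0$.} Then $K(W)^\alpha/K(W^-)\leq C_\lambda^{\alpha(j+1)}\varphi^{-(1-\alpha)(|W^-|-1)}$. Here $W^-$ contains at least $(j-1)m_k$ free digits, since $j$ inserted pairs are preceded by $j$ full blocks of length $m_k$. The exponent is therefore at most
\begin{align*}
    \alpha(j+1)\log C_\lambda-(1-\alpha)\bigl((j-1)m_k+|V^-|-1\bigr)\log\varphi.
\end{align*}
Since $m_k\to\infty$, for $k$ large we have $(1-\alpha)m_k\log\varphi\geq\alpha\log C_\lambda$, so this is at most
\begin{align*}
    2\alpha\log C_\lambda+(1-\alpha)(m_k+1)\log\varphi
\end{align*}
uniformly in $j$. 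That is not yet bounded: the early part is fine, but the worry is $j=0$ or $1$ with few free digits. Indeed, for $j\le1$ the inserted count is at most one pair, giving the constant $C_\lambda^{2\alpha}$, and the $\varphi$ term has a nonpositive exponent apart from the $+1$. So I would rather not drop the negative term: for $j\geq1$, use $(j-1)m_k(1-\alpha)\log\varphi\geq (j-1)\alpha\log C_\lambda$, which leaves $2\alpha\log C_\lambda+(1-\alpha)\log\varphi$. This gives $C_{\lambda,f,M,\alpha,2}\coloneqq C_\lambda^{2\alpha}\varphi$, with $k_{\lambda,f,M,\alpha,2}$ chosen so that both Case 1 and $m_k\geq\alpha\log C_\lambda/((1-\alpha)\log\varphi)$ hold for $k\geq k_{\lambda,f,M,\alpha,2}$.

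The main obstacle is the bookkeeping of Case 2, in particular the trailing $\sigma_1$ before the $\lambda$-run and partial pieces. This requires checking that the pair count $j$ is always dominated by the number of complete blocks $U_{k,i}$ already inside $W^-$. A lone trailing $\sigma_1$ costs at most one extra factor $2(\lambda+1)$, which is absorbed in $C_\lambda$.
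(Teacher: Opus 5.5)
Your proof is correct and follows essentially the same route as the paper: the same bound $K(W)\leq C_\lambda^{\,j+1}K(\lambda^u)K(W^-)$, the same treatment of prefixes reaching into $\lambda^{s_k}$ via the computation of Lemma~\ref{lem:full-stage-comparison}, and the same absorption of $C_\lambda^{\alpha j}$ by $\varphi^{(1-\alpha)jm_k}$ once $m_k$ is large. The detour in Case~2 is unnecessary, because your own justification already gives $|W^-|\geq jm_k$ (which is what the paper uses), not just $(j-1)m_k$. With that bound, $\alpha\log C_\lambda+(1-\alpha)\log\varphi$ holds uniformly in $j$ and $j=0$ needs no separate discussion.
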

\begin{proof}
Pick any $k\in\mathbb{N}$ and $U_k\in\{1,\ldots,B\}^{g_k}$. Let $W$ be a prefix of $U^+_k$.

Suppose $W$ does not enter the terminal block $\sigma_1\lambda^{s_k}\sigma_2$. Then $W$ consists of $j$ complete pieces $U_{k,i}\sigma_1\sigma_2$, followed by a final prefix lying in one of $U_{k,j+1}$, $U_{k,j+1}\sigma_1$, or $U_{k,j+1}\sigma_1\sigma_2$, with $0\leq j\leq p_k$. The corresponding retained word $W^-$ is a prefix of $U_k$, and $W^-$ is of length at least $jm_k$. By repeated use of Lemma~\ref{lem:continuant-concatenation}, each insertion of a block $\sigma_1\sigma_2$ between two retained subwords increases the continuant by at most a multiplicative factor $4K(\sigma_1\sigma_2)$, while appending a
terminal $\sigma_1$ or $\sigma_1\sigma_2$ contributes at most a factor $8K(\sigma_1)K(\sigma_2)$. Hence, for any $j\in\{0,\ldots,p_k\}$,
\begin{align*}
    K(W)\leq {C_\lambda}^{j+1}K(W^-).
\end{align*}
By Lemma~\ref{lem:continuant-growth}, one obtains that for any $j\in\{0,\ldots,p_k\}$,
\begin{align*}
    \log\frac{K(W)^\alpha}{K(W^-)}
    &\leq
    \alpha(j+1)\log C_\lambda
    -(1-\alpha)\log K(W^-) \\
    &\leq
    \alpha(j+1)\log C_\lambda-(1-\alpha)(j m_k-1)\log\varphi.
\end{align*}
Since $\lim_{k\to\infty}m_k=+\infty$, the coefficient of $j$ on the right-hand side is negative for all sufficiently large $k\in\mathbb{N}$. Hence, the right-hand side is bounded above by a constant depending only on $\lambda$, $M$ and $\alpha$.

Suppose $W$ enters the terminal block $\sigma_1\lambda^{s_k}\sigma_2$. In this case, there exist $u\in\{0,\ldots,s_k\}$ and $\theta\in\{\varnothing,\sigma_2\}$ such that
\begin{align*}
    W=
    U_{k,1}\sigma_1\sigma_2\cdots U_{k,p_k}\sigma_1\sigma_2U_{k,p_k+1}\sigma_1\lambda^{u}\theta;
\end{align*}
in particular $W^-=U_k$. By Lemma~\ref{lem:continuant-concatenation},
\begin{align*}
    K(W)\leq {C_\lambda}^{p_k+1}K(\lambda^u)K(U_k).
\end{align*}
By Lemma~\ref{lem:continuant-growth},
\begin{align*}
    \log\frac{K(W)^\alpha}{K(W^-)}
    =
    \log\frac{K(W)^\alpha}{K(U_k)}
    \leq
    \alpha(p_k+1)\log C_\lambda+\alpha(u+1)\log\tau_\lambda-(1-\alpha)(g_k-1)\log\varphi.
\end{align*}
By $u\leq s_k$, $g_k=MH_\lambda r_k$ and Proposition~\ref{prop:scale-inputs-final}, one obtains for all sufficiently large $k\in\mathbb{N}$, 
\begin{align*}
    \log\frac{K(W)^\alpha}{K(W^-)}
    \leq -2\eta_{\lambda,M,\alpha} r_k<0.
\end{align*}
\end{proof}

Let $\lambda,M,B\in\mathbb{N}$ and $f:\mathbb{N}\to\mathbb{N}$ be a function. Suppose $B\geq\lambda+2$. Define, for any $k\in\mathbb{N}$, $\mathscr{C}_k(\mathcal{K}_{\lambda,f,M,B})$ to be the family of stage-$k$ cylinders of $\mathcal{K}_{\lambda,f,M,B}$; that is,
\begin{align*}
    \mathscr{C}_k(\mathcal{K}_{\lambda,f,M,B})
    \coloneqq
    \left\{J=\mathcal{I}\!\left(U^+_1U^+_2\cdots U^+_k\right)\cap\mathcal{K}_{\lambda,f,M,B}:
    U_i\in\{1,\dots,B\}^{g_i}\text{ for all $i\in\{1,2,\ldots,k\}$.}\right\}.
\end{align*}
Define $\pi_{\lambda,f,M,B}:\mathcal{K}_{\lambda,f,M,B}\to \mathcal{F}_B$ to be the map obtained by deleting, in each stage, all inserted separators $\sigma_1\sigma_2$ together with the terminal block $\sigma_1\lambda^{s_k}\sigma_2$. Note that $\pi_{\lambda,f,M,B}$ is surjective and $\pi_{\lambda,f,M,B}\circ\zeta_{\lambda,f,M,B}=\mathrm{id}_{\mathcal{F}_B}$.

\begin{prop}
\label{prop:local-holder-final}
Let $\lambda\in\mathbb{N}$ and $f:\mathbb{N}\to\mathbb{N}$ be a function. Suppose $f$ is admissible. Then for any $M,B\in\mathbb{N}$ and $0<\alpha<M/(M+1)$, if $B\geq\lambda+2$ then there exist $k_{\lambda,f,M,\alpha}\in\mathbb{N}$ and $C_{\lambda,f,M,B,\alpha}>0$ such that for any
\begin{align*}
    J
    =\mathcal{I}\!\left(U^+_1U^+_2\cdots U^+_{k_{\lambda,f,M,\alpha}}\right)\cap\mathcal{K}_{\lambda,f,M,B}
    \in\mathscr{C}_{k_{\lambda,f,M,\alpha}}(\mathcal{K}_{\lambda,f,M,B}),
\end{align*}
where \(U_1,\ldots,U_{k_{\lambda,f,M,\alpha}}\) are the unique words determining \(J\), the restriction of $\pi_{\lambda,f,M,B}$ to $J$:
\begin{align*}
    \pi_{\lambda,f,M,B}|_J:J\to \mathcal{I}\!\left(U_1U_2\cdots U_{k_{\lambda,f,M,\alpha}}\right)\cap\mathcal{F}_B,
\end{align*}
is surjective and
for any $x,y\in J$,
\begin{align*}
    \left|\pi_{\lambda,f,M,B}(x)-\pi_{\lambda,f,M,B}(y)\right|
    \leq C_{\lambda,f,M,B,\alpha} \left|x-y\right|^\alpha.
\end{align*}
\end{prop}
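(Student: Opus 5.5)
The plan is to compare the length of the longest common prefix of two points of $J$ with the length of the longest common prefix of their images under $\pi_{\lambda,f,M,B}$. Let $k_0\coloneqq k_{\lambda,f,M,\alpha}$ be at least $\max\{k_{\lambda,f,M,\alpha,1},k_{\lambda,f,M,\alpha,2}\}$ from Lemmas~\ref{lem:full-stage-comparison} and~\ref{lem:stage-prefix-comparison}.

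\emph{Surjectivity.} This is immediate. For $z\in\mathcal{I}(U_1\cdots U_{k_0})\cap\mathcal{F}_B$, the point $\zeta_{\lambda,f,M,B}(z)$ lies in $J$, and $\pi_{\lambda,f,M,B}\circ\zeta_{\lambda,f,M,B}=\mathrm{id}_{\mathcal{F}_B}$.

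\emph{Reduction to prefixes.} Take $x\neq y$ in $J$. Let $W$ be their longest common prefix, and let $a\neq b$ be the next digits. Since all separator digits and all terminal $\lambda$-blocks are prescribed by position, the first disagreement occurs at a free position. Moreover $W\supseteq U^+_1\cdots U^+_{k_0}$, because $x,y\in J$. Write
\begin{align*}
    W=U^+_1\cdots U^+_{j-1}W',
\end{align*}
where $j>k_0$ and $W'$ is a proper prefix of $U^+_j$; if $W'$ is empty, view it as the empty prefix. Then $\pi(x)$ and $\pi(y)$ share the prefix $V\coloneqq U_1\cdots U_{j-1}W'^-$. Lemma~\ref{lem:prefix-map-derivative} then gives
\begin{align*}
    |\pi(x)-\pi(y)|\leq \diam\mathcal{I}(V)\leq K(V)^{-2}.
\end{align*}

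\emph{Lower bound on $|x-y|$.} I need $|x-y|\geq c_B K(W)^{-2}$. Since $x\in T_{Wa}(\mathcal{F}_B)$ and $\mathcal{F}_B\subset[1/(B+2),1-1/(B+2)]$, say, the point $x$ stays at distance at least $c_B\,|\mathcal{I}(Wa)|$ from the endpoints of $\mathcal{I}(Wa)$. The same holds for $y$ with respect to $\mathcal{I}(Wb)$. These two cylinders have disjoint interiors, so $|x-y|\geq c_B\min\{|\mathcal{I}(Wa)|,|\mathcal{I}(Wb)|\}$. Lemmas~\ref{lem:prefix-map-derivative} and~\ref{lem:continuant-concatenation} give $K(Wa)\leq 2(B+1)K(W)$, and hence $|x-y|\geq c'_B K(W)^{-2}$. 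This gap estimate is the step I expect to need the most care, in particular the precise containment of $\mathcal{F}_B$ away from $0$ and $1$ and the adjacency of sibling cylinders. Bounded digits make it routine, though.

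\emph{Continuant comparison.} The core estimate is $K(W)^\alpha\leq C\,K(V)$ with $C$ uniform. By Lemma~\ref{lem:continuant-concatenation},
\begin{align*}
    K(W)^\alpha\leq 2^{j\alpha}\prod_{i<j}K(U^+_i)^\alpha\,K(W')^\alpha
    \quad\text{and}\quad
    K(V)\geq \prod_{i<j}K(U_i)\,K(W'^-).
\end{align*}
\begin{itemize}
    \item For $k_0\leq i<j$, Lemma~\ref{lem:full-stage-comparison} gives $K(U^+_i)^\alpha\leq 2^{-2\alpha}K(U_i)$. The factor $2^{-2\alpha}$ absorbs the concatenation factor $2^\alpha$ per stage, with room to spare.
    \item For the partial stage, Lemma~\ref{lem:stage-prefix-comparison} gives $K(W')^\alpha\leq C_{\lambda,f,M,\alpha,2}K(W'^-)$.
    \item The first $k_0-1$ stages contribute a ratio depending only on $U_1,\dots,U_{k_0-1}$. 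There are finitely many such words, since the digits are bounded by $B$, so the maximum over them is a constant depending on $\lambda,f,M,B,\alpha$.
\end{itemize}

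Combining these bounds gives
\begin{align*}
    |\pi(x)-\pi(y)|\leq K(V)^{-2}\leq C^2K(W)^{-2\alpha}\leq C^2(c'_B)^{-\alpha}|x-y|^\alpha,
\end{align*}
which is the claimed Hölder estimate.
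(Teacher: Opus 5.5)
Your proof is correct and follows essentially the same route as the paper. It uses the longest common prefix (whose first disagreement is necessarily at a free position), the cylinder-diameter bound $|\pi(x)-\pi(y)|\le K(V)^{-2}$, a separation bound $|x-y|\geq c'_B K(W)^{-2}$ at the first differing digit, and the continuant comparison via Lemmas~\ref{lem:full-stage-comparison} and~\ref{lem:stage-prefix-comparison} together with a finite maximum over the initial stages.

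The only variation is the separation step. There your containment $\mathcal{F}_B\subset[1/(B+2),1-1/(B+2)]$ plus the sibling-cylinder argument is more accurate than the paper's claim $\mathcal{F}_B\subset[1/\tau_B,1/\varphi]$. That claim fails (e.g.\ $[1,B,1,B,\ldots]>1/\varphi$), although the disjointness of the paper's intervals $J_{B,u}$ survives once their endpoints are corrected.
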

\begin{proof}
The surjectivity of $\pi_{\lambda,f,M,B}|_J$ is immediate from the definition of $\pi$ and $\zeta_{\lambda,f,M,B}$.

Define $k_{\lambda,f,M,\alpha}\coloneqq \max{\{k_{\lambda,f,M,\alpha,1},k_{\lambda,f,M,\alpha,2}\}}$, where $k_{\lambda,f,M,\alpha,1}$ and $k_{\lambda,f,M,\alpha,2}$ are given by Lemmas~\ref{lem:full-stage-comparison} and~\ref{lem:stage-prefix-comparison} respectively.
Pick any $J\in\mathscr{C}_{k_{\lambda,f,M,\alpha}}(\mathcal{K}_{\lambda,f,M,B})$. There exist unique $U_{J,i}\in\{1,\ldots,B\}^{g_i}$ such that 
\begin{align*}
    J=\mathcal{I}(V^+_J)\cap\mathcal{K}_{\lambda,f,M,B},
\end{align*}
where $V^+_J\coloneqq U_{J,1}^+U_{J,2}^+\cdots U_{J,k_{\lambda,f,M,\alpha}}^+$. Define $V_J\coloneqq U_{J,1}U_{J,2}\cdots U_{J,k_{\lambda,f,M,\alpha}}$ and
\begin{align*}
    C_{\lambda,f,M,B,\alpha,0}
    \coloneqq 2^{2\alpha}C_{\lambda,f,M,\alpha,2}
    \max_{J\in\mathscr{C}_{k_{\lambda,f,M,\alpha}}(\mathcal{K}_{\lambda,f,M,B})}{\frac{K(V^+_J)^\alpha}{K(V_J)}}<+\infty,
\end{align*}
as the family $\mathscr{C}_{k_{\lambda,f,M,\alpha}}(\mathcal{K}_{\lambda,f,M,B})$ is finite. Define, for any $u\in\{1,\dots,B\}$, the interval $J_{B,u}$ by
\begin{align*}
    J_{B,u}\coloneqq
    \left[
    \frac{1}{u+1/\varphi},
    \frac{1}{u+1/\tau_B}
    \right].
\end{align*}
Note that for any $\xi\in\mathcal{F}_B$ and $u\in\{1,\ldots,B\}$, if $a_1(\xi)=u$ then $T(\xi)\in[1/\tau_B,1/\varphi]$ and $\xi\in J_{B,u}$. Thus, the family $\mathscr{J}_B\coloneqq(J_{B,u})_{u\in\{1,\ldots,B\}}$ consists of pairwise disjoint compact intervals, and one obtains:
\begin{align*}
    \Delta_B
    \coloneqq
    \min_{1\leq u_1<u_2\leq B}\operatorname{dist}{\left(J_{B,u_1},J_{B,u_2}\right)}>0.
\end{align*}

Pick any $x,y\in J$. Suppose $x\ne y$. Define $\omega$ to be the longest common word after the fixed prefix $V^+_J$; that is, there exist $m\geq k_{\lambda,f,M,\alpha}+1$, words $U_j\in\{1,\ldots,B\}^{g_j}$ for $j\in\{k_{\lambda,f,M,\alpha}+1,\ldots,m-1\}$, a word $U_m\in\{1,\ldots,B\}^{g_m}$ and a prefix $W$ of $U_m^+$ such that
\begin{align*}
    \omega=U^+_{k_{\lambda,f,M,\alpha}+1}\cdots U^+_{m-1}W
\end{align*}
(the case $m=k_{\lambda,f,M,\alpha}+1$ and $\omega=W$ is allowed). Note that
\begin{align*}
    x=T_{V^+_J\omega}(\xi_x), \qquad
    y=T_{V^+_J\omega}(\xi_y),
\end{align*}
and $a_1(\xi_x)\ne a_1(\xi_y)$. Since $\mathcal{K}_{\lambda,f,M,B}\subset\mathcal{F}_B$, one obtains $a_1(\xi_x),a_1(\xi_y)\in\{1,\dots,B\}$ and $\xi_x,\xi_y\in\mathcal{F}_B\subset[1/\tau_B,1/\varphi]$. Hence, $\xi_x$ and $\xi_y$ belong to two distinct intervals among $\mathscr{J}_B$, so
\begin{align*}
    |\xi_x-\xi_y|\geq \Delta_B.
\end{align*}

By Lemma~\ref{lem:prefix-map-derivative} and the mean value theorem, one obtains
\begin{align}
\label{eq:x-y-lower-local-holder-revised}
    \left|x-y\right|
    =
    \left|T_{V^+_J\omega}(\xi_x)-T_{V^+_J\omega}(\xi_y)\right|
    \ge
    \frac{\Delta_B}{4K(V^+_J\omega)^2}.
\end{align}
Let $\omega^-$ be the word obtained from $\omega$ by deleting all
inserted symbols stage by stage; that is,
\begin{align*}
    \omega^-=U_{k_{\lambda,f,M,\alpha}+1}\cdots U_{m-1}W^-.
\end{align*}
By the definition of $\pi_{\lambda,f,M,B}$ and $W^-$, deleting the inserted symbols from the common prefix $V_J^+\omega$ yields exactly $V_J\omega^-$. Then $\pi_{\lambda,f,M,B}(x)$ and $\pi_{\lambda,f,M,B}(y)$ agree on the prefix $V_J\omega^-$. By the standard cylinder diameter estimate, one obtains 
\begin{align}
\label{eq:pi-x-y-upper-local-holder-revised}
    \left|\pi_{\lambda,f,M,B}(x)-\pi_{\lambda,f,M,B}(y)\right|
    \leq \operatorname{diam}{\mathcal{I}\!\left(V_J\omega^-\right)}
    \leq \frac{1}{K(V_J\omega^-)^2}.
\end{align}

By repeated application of Lemma~\ref{lem:continuant-concatenation}, one obtains
\begin{align*}
    K(V^+_J\omega)
    \leq
    2^{m-k_{\lambda,f,M,\alpha}}
    K(V^+_J)
    \prod_{j=k_{\lambda,f,M,\alpha}+1}^{m-1}K(U^+_j)\,K(W),
\end{align*}
where the empty product is understood to be equal to $1$, and
\begin{align*}
    K(V_J\omega^-)
    \geq
    K(V_J)
    \prod_{j=k_{\lambda,f,M,\alpha}+1}^{m-1}K(U_j)\,K(W^-).
\end{align*}
By applying Lemma~\ref{lem:full-stage-comparison} to the full stages and Lemma~\ref{lem:stage-prefix-comparison} to $W$ respectively, one obtains
\begin{align*}
    K(V^+_J\omega)^\alpha
    &\leq
    2^{(m-k_{\lambda,f,M,\alpha})\alpha}
    K(V^+_J)^\alpha
    \prod_{j=k_{\lambda,f,M,\alpha}+1}^{m-1}K(U^+_j)^\alpha
    K(W)^\alpha \\
    &\leq
    2^{(m-k_{\lambda,f,M,\alpha})\alpha}
    K(V^+_J)^\alpha
    \left(\prod_{j=k_{\lambda,f,M,\alpha}+1}^{m-1}2^{-2\alpha}K(U_j)\right)
    C_{\lambda,f,M,\alpha,2}K(W^-).
\end{align*}
One obtains from $2^{(m-k_{\lambda,f,M,\alpha})\alpha}\prod_{j=k_{\lambda,f,M,\alpha}+1}^{m-1}2^{-2\alpha}=2^{(2+k_{\lambda,f,M,\alpha}-m)\alpha}\leq 2^{2\alpha}$ and the definition of $C_{\lambda,f,M,B,\alpha,0}$ that
\begin{align*}
    K(V^+_J\omega)^\alpha
    \leq
    \frac{2^{2\alpha}C_{\lambda,f,M,\alpha,2}K(V^+_J)^\alpha}{K(V_J)}
    K(V_J\omega^-)
    \leq
    C_{\lambda,f,M,B,\alpha,0}K(V_J\omega^-).
\end{align*}
By combining the above with~\eqref{eq:x-y-lower-local-holder-revised} and~\eqref{eq:pi-x-y-upper-local-holder-revised}, one obtains
\begin{align*}
    \left|\pi_{\lambda,f,M,B}(x)-\pi_{\lambda,f,M,B}(y)\right|
    &\leq \frac{1}{K(V_J\omega^-)^2} 
    \leq \frac{(C_{\lambda,f,M,B,\alpha,0})^2}{K(V^+_J\omega)^{2\alpha}}
    \leq (C_{\lambda,f,M,B,\alpha,0})^2\left(\frac{4}{\Delta_B}\right)^\alpha \left|x-y\right|^\alpha \\
    &\leq C_{\lambda,f,M,B,\alpha}\left|x-y\right|^\alpha,
\end{align*}
where $C_{\lambda,f,M,B,\alpha}\coloneqq (C_{\lambda,f,M,B,\alpha,0})^2({4}/{\Delta_B})^\alpha$.
\end{proof}

\section{Dimension estimate}

\begin{prop}
\label{prop:dimension-lower-final}
Let $\lambda\in\mathbb{N}$ and $f:\mathbb{N}\to\mathbb{N}$ be a function. Suppose $f$ is admissible. Then for any $M,B\in\mathbb{N}$ and $0<\alpha<M/(M+1)$, if $B\geq \lambda+2$ then
\begin{align*}
    \dim{\mathcal{K}_{\lambda,f,M,B}}
    \geq \alpha\dim{\mathcal{F}_B}.
\end{align*}
\end{prop}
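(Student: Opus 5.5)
We will derive the bound from the local H\"older estimate of Proposition~\ref{prop:local-holder-final}, combined with Lemma~\ref{lem:holder-dimension-estimate} and Lemma~\ref{lem:bounded-prefix-dimension}. Fix $M,B\in\mathbb{N}$ with $B\geq\lambda+2$ and $0<\alpha<M/(M+1)$, and write $k_*\coloneqq k_{\lambda,f,M,\alpha}$ for the stage index given by Proposition~\ref{prop:local-holder-final}.

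The first step is to pick a single stage-$k_*$ cylinder
\begin{align*}
    J=\mathcal{I}\!\left(U^+_1\cdots U^+_{k_*}\right)\cap\mathcal{K}_{\lambda,f,M,B}\in\mathscr{C}_{k_*}(\mathcal{K}_{\lambda,f,M,B}).
\end{align*}
Any choice will do, for instance all $U_i$ constant words of $1$'s. Proposition~\ref{prop:local-holder-final} then says that $\pi_{\lambda,f,M,B}|_J$ maps $J$ onto $\mathcal{I}(U_1\cdots U_{k_*})\cap\mathcal{F}_B$. It also says that this map is $\alpha$-H\"older with constant $C_{\lambda,f,M,B,\alpha}$ with respect to the Euclidean metric on both sides.

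The second step applies Lemma~\ref{lem:holder-dimension-estimate} with $X=J$, $Y=\mathcal{I}(U_1\cdots U_{k_*})\cap\mathcal{F}_B$ and $g=\pi_{\lambda,f,M,B}|_J$. This gives
\begin{align*}
    \dim\left(\mathcal{I}(U_1\cdots U_{k_*})\cap\mathcal{F}_B\right)\leq \frac{\dim J}{\alpha}.
\end{align*}
The word $\nu\coloneqq U_1\cdots U_{k_*}$ lies in $\{1,\dots,B\}^{n}$ with $n=g_1+\cdots+g_{k_*}\geq1$. So Lemma~\ref{lem:bounded-prefix-dimension} identifies the left-hand side with $\dim\mathcal{F}_B$. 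Finally, $J\subset\mathcal{K}_{\lambda,f,M,B}$, so monotonicity of Hausdorff dimension yields $\alpha\dim\mathcal{F}_B\leq\dim J\leq\dim\mathcal{K}_{\lambda,f,M,B}$, which is the claim.

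There is no real obstacle left, since the analytic content, namely the continuant distortion estimates and the separation $\Delta_B$, was already absorbed into Proposition~\ref{prop:local-holder-final}. The only points to check are bookkeeping ones. Lemma~\ref{lem:holder-dimension-estimate} needs a surjective map, which is supplied exactly by the surjectivity clause of Proposition~\ref{prop:local-holder-final}. Its proof uses $\mathcal{H}^s(X)=0$ for $s>\dim X$, which holds for arbitrary subsets of $\mathbb{R}$ with the induced metric. One should also note that the H\"older estimate is only local, valid on one stage-$k_*$ cylinder. This is why we restrict to $J$ rather than applying the lemma to $\pi_{\lambda,f,M,B}$ on all of $\mathcal{K}_{\lambda,f,M,B}$.
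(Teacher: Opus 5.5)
Your proposal is correct and follows the paper's proof: both combine the surjective $\alpha$-H\"older map $\pi_{\lambda,f,M,B}|_J$ from Proposition~\ref{prop:local-holder-final} on a stage-$k_{\lambda,f,M,\alpha}$ cylinder with Lemmas~\ref{lem:bounded-prefix-dimension} and~\ref{lem:holder-dimension-estimate}. The only difference is cosmetic. The paper runs the argument on every cylinder and then invokes finite stability, whereas you rightly note that a single cylinder together with monotonicity of Hausdorff dimension already gives the lower bound.
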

\begin{proof}
Let $k_{\lambda,f,M,\alpha}$ be given by Proposition~\ref{prop:local-holder-final}. Since the family $\mathscr{C}_{k_{\lambda,f,M,\alpha}}(\mathcal{K}_{\lambda,f,M,B})$ is finite, there exist $N\in\mathbb{N}$ and $J_1,\dots,J_N\in\mathscr{C}_{k_{\lambda,f,M,\alpha}}(\mathcal{K}_{\lambda,f,M,B})$ such that $\mathscr{C}_{k_{\lambda,f,M,\alpha}}(\mathcal{K}_{\lambda,f,M,B})=\{J_1,\dots,J_N\}$. Note that
\begin{align*}
    \mathcal{K}_{\lambda,f,M,B}=\bigcup_{\nu=1}^N J_\nu.
\end{align*}
Pick any $\nu\in\{1,\ldots,N\}$. One obtains from Proposition~\ref{prop:local-holder-final} that the map
\begin{align*}
    \pi_{\lambda,f,M,B}|_{J_\nu}:
    J_\nu\to \mathcal{I}\!\left(V_\nu\right)\cap \mathcal{F}_B,
\end{align*}
for some finite word $V_\nu\in\bigcup_{n\in\mathbb{N}}\{1,\ldots,B\}^n$, is surjective and $\alpha$-H\"older. By Lemmas~\ref{lem:bounded-prefix-dimension} and~\ref{lem:holder-dimension-estimate}, one obtains
\begin{align*}
    \dim{\mathcal{F}_B}
    =\dim{\left(\mathcal{I}\!\left(V_\nu\right)\cap \mathcal{F}_B\right)}
    \leq\frac{1}{\alpha}\dim{J_\nu}.
\end{align*}
By the finite stability of Hausdorff dimension, 
\begin{align*}
    \dim{\mathcal{K}_{\lambda,f,M,B}}
    = \max_{\nu\in\{1,\ldots,N\}}\dim{J_\nu}
    \geq \alpha\dim{\mathcal{F}_B}.
\end{align*}
\end{proof}

\begin{prop}
\label{prop:eventual-full-dimension}
Let $\lambda\in\mathbb N$ and $f:\mathbb{N}\to\mathbb{N}$ be a function. Suppose $f$ is admissible. Then $\dim{\mathfrak{F}_{\lambda,f}}=1$.
\end{prop}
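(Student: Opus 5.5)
The plan is to combine the inclusion of Proposition~\ref{prop:run-control-final} with the lower bound of Proposition~\ref{prop:dimension-lower-final}, and then let the parameters tend to their limits. The upper bound is trivial: $\mathfrak{F}_{\lambda,f}\subset[0,1]$, so $\dim{\mathfrak{F}_{\lambda,f}}\leq 1$. Only the lower bound needs work.

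Fix $M,B\in\mathbb{N}$ with $B\geq\lambda+2$. By Proposition~\ref{prop:run-control-final}, $\mathcal{K}_{\lambda,f,M,B}\subset\mathfrak{F}_{\lambda,f}$. By monotonicity of Hausdorff dimension and Proposition~\ref{prop:dimension-lower-final}, for every $0<\alpha<M/(M+1)$,
\begin{align*}
    \dim{\mathfrak{F}_{\lambda,f}}
    \geq\dim{\mathcal{K}_{\lambda,f,M,B}}
    \geq\alpha\dim{\mathcal{F}_B}.
\end{align*}
Letting $\alpha\uparrow M/(M+1)$ gives $\dim{\mathfrak{F}_{\lambda,f}}\geq \frac{M}{M+1}\dim{\mathcal{F}_B}$. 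Letting $M\to\infty$ then gives $\dim{\mathfrak{F}_{\lambda,f}}\geq\dim{\mathcal{F}_B}$ for every $B\geq\lambda+2$.

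The remaining step, which is the only input not established earlier in the paper, is the classical fact $\lim_{B\to\infty}\dim{\mathcal{F}_B}=1$. This is Jarník's theorem on numbers with bounded partial quotients; the quantitative form $\dim{\mathcal{F}_B}\geq 1-\frac{C}{B}$ is also due to Jarník, and the asymptotic expansion is due to Hensley. I would cite it rather than reprove it. If a self-contained argument were wanted, one could use the pressure-function characterisation of $\dim{\mathcal{F}_B}$ as the zero of $s\mapsto P_B(s)=\lim_n\frac1n\log\sum_{\omega\in\{1,\dots,B\}^n}K(\omega)^{-2s}$. As $B\to\infty$, $P_B(s)$ increases to the full Gauss-map pressure, which is positive for every $s<1$. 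This is the step I would flag as the main obstacle in a self-contained treatment, but citing Jarník disposes of it. Letting $B\to\infty$ then gives $\dim{\mathfrak{F}_{\lambda,f}}\geq 1$, and hence $\dim{\mathfrak{F}_{\lambda,f}}=1$.
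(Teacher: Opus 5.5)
Your proposal is correct and is essentially the paper's proof: you use the inclusion $\mathcal{K}_{\lambda,f,M,B}\subset\mathfrak{F}_{\lambda,f}$, take the supremum over $\alpha$ to get the bound $\frac{M}{M+1}\dim{\mathcal{F}_B}$, and invoke the classical Jarník--Hensley fact $\dim{\mathcal{F}_B}\to1$. The only difference is that you send $M\to\infty$ before $B\to\infty$ while the paper does the reverse, which does not matter because the bound is a product of a factor depending only on $M$ and a factor depending only on $B$.
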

\begin{proof}
By Propositions~\ref{prop:run-control-final} and~\ref{prop:dimension-lower-final}, for any $M,B\in\mathbb{N}$, if $B\geq \lambda+2$ then
\begin{align*}
    \dim{\mathfrak{F}_{\lambda,f}}
    \geq \dim{\mathcal{K}_{\lambda,f,M,B}}
    \geq\sup_{0<\alpha<M/(M+1)}\alpha\dim{\mathcal{F}_B}
    =\frac{M}{M+1}\dim{\mathcal{F}_B}.
\end{align*}
By the classical fact~\cite{hensley1992continued,jarnik1928metrischen} that $\lim_{B\to\infty}\dim{\mathcal{F}_B}=1$, one obtains that for any $M\in\mathbb{N}$,
\begin{align*}
    \dim{\mathfrak{F}_{\lambda,f}}\geq\frac{M}{M+1}.
\end{align*}
Therefore, $\dim{\mathfrak{F}_{\lambda,f}}\geq1$, and the reverse inequality is trivial.
\end{proof}

\begin{proof}[Proof of Theorem~\ref{thm:main-sn-final}]
Note that $\mathfrak{F}_{\lambda,f}=\bigcup_{N\in\mathbb{N}}\mathfrak{F}_{\lambda,f,N}$, where
\begin{align*}
    \mathfrak{F}_{\lambda,f,N}
    &\coloneqq
    \left\{x\in[0,1)\setminus\mathbb{Q}:
    \lim_{n\to\infty}\frac{L_n(x,\lambda)}{f(n)}=1
    \text{ and }\right.\\
    &\qquad\qquad\left.\text{for any $m\in\mathbb{N}$ and $\mu\in\mathbb{N}\setminus\{\lambda\}$, if $m\geq N$ then }\frac{L_m(x,\lambda)}{L_m(x,\mu)}>1\right\}.
\end{align*}
By Proposition~\ref{prop:eventual-full-dimension} and the countable stability of Hausdorff dimension, $\sup_{N\in\mathbb{N}}{\dim{\mathfrak{F}_{\lambda,f,N}}}=1$.

Pick any $N\in\mathbb{N}$, $y\in\mathfrak{F}_{\lambda,f,N}$. Define $x\coloneqq T_{\lambda^N}(y)$. One claims that for any $\mu\in\mathbb{N}\setminus\{\lambda\}$ and $n\in\mathbb{N}$, $L_n(x,\lambda)>L_n(x,\mu)$. In the case of $n<N$, the first $n$ partial quotients of $x$ are all equal to $\lambda$. One obtains $L_n(x,\lambda)=n>L_n(x,\mu)=0$. In the case of $n\geq N$ and $m\coloneqq n-N<N$, the initial block $\lambda^N$ has length larger than the entire tail segment of length $m$. One obtains $L_n(x,\lambda)\geq N>m\geq L_n(x,\mu)$.

In the case of $n\geq N$ and $m\geq N$, one obtains that every $\mu$-run among the first $N+m$ partial quotients of $x$ is contained entirely in the tail corresponding to the first $m$ partial quotients of $y$. Thus, $L_n(x,\mu)=L_m(y,\mu)$. Since $y\in\mathfrak{F}_{\lambda,f,N}$ and $m\geq N$, one obtains $L_m(y,\lambda)>L_m(y,\mu)$. Every $\lambda$-run among the first $m$ partial quotients of $y$ also occurs among the first $N+m$ partial quotients of $x$; hence, $L_m(y,\lambda)\leq L_{N+m}(x,\lambda)=L_n(x,\lambda)$. Hence, $L_n(x,\lambda)\geq L_m(y,\lambda)>L_m(y,\mu)=L_n(x,\mu)$. The claim is proved for all cases.

Note that for any $m,N\in\mathbb{N}$,
\begin{align*}
    L_m(y,\lambda)
    \leq L_{N+m}(x,\lambda)
    \leq N+L_m(y,\lambda).
\end{align*}
Since $f$ is admissible, for any $N\in\mathbb{N}$ and sufficiently large $m\in\mathbb{N}$, $N\leq f(m)$ and 
\begin{align*}
    1\leq\frac{f(m+N)}{f(m)}\leq\frac{f(m+f(m))}{f(m)}.
\end{align*}
Thus, $\lim_{m\to\infty}f(m+N)/f(m)=1$. By $\lim_{m\to\infty}{L_m(y,\lambda)}/{f(m)}=1$, one obtains from the squeeze theorem that $\lim_{m\to\infty}{L_{N+m}(x,\lambda)}/{f(N+m)}=1$; hence $x\in{\mathfrak E}_{\lambda,f}$, and for any $N\in\mathbb{N}$,
\begin{align*}
    T_{\lambda^N}\!\left(\mathfrak{F}_{\lambda,f,N}\right)
    \subset{\mathfrak E}_{\lambda,f}.
\end{align*}
By Lemma~\ref{lem:prefix-map-derivative}, one obtains that for any $N\in\mathbb{N}$, 
\begin{align*}
    \dim{{\mathfrak E}_{\lambda,f}}
    \geq \dim{T_{\lambda^N}\!\left(\mathfrak{F}_{\lambda,f,N}\right)}
    = \dim{\mathfrak{F}_{\lambda,f,N}}.
\end{align*}
Therefore, $\dim{\mathfrak{E}_{\lambda,f}}\geq1$, and the reverse inequality is trivial.
\end{proof}

\bibliographystyle{siam} 
\bibliography{name}

\end{document}